\theoremstyle{plain}
\newtheorem{thm}{Theorem}[section]
\newtheorem{prop}[thm]{Proposition}
\newtheorem{lemma}[thm]{Lemma}
\newtheorem{question}[thm]{Question}
\theoremstyle{definition}
\newtheorem{dfn}[thm]{Definition}
\newtheorem{hypo}[thm]{Hypothesis}
\theoremstyle{remark}
\newtheorem{rem}[thm]{Remark}
 \newcommand{\unter}[2]{\genfrac{}{}{0pt}{}{#1}{#2}}
\begin{document}

\title{Universal deformation rings and generalized quaternion defect groups}

\author{Frauke M. Bleher}
\address{Department of Mathematics\\University of Iowa\\
Iowa City, IA 52242-1419}
\email{fbleher@math.uiowa.edu}
\thanks{The author was supported in part by  
NSA Grant H98230-06-1-0021 and NSF Grant DMS06-51332.}
\subjclass[2000]{Primary 20C20; Secondary 20C15, 16G10}
\keywords{Universal deformation rings; quaternion defect groups; dihedral defect groups}

\begin{abstract}
We determine the universal deformation rings $R(G,V)$ of certain mod $2$ representations $V$ of
a finite group $G$ 
which belong to a $2$-modular block of $G$ whose defect groups are isomorphic to a generalized
quaternion group $D$.
We show that for these $V$, a question raised by the author and Chinburg concerning 
the relation of $R(G,V)$ to  $D$ has an affirmative answer.  We also show that
$R(G,V)$ is a complete intersection even though $R(G/N,V)$ need not be for certain
normal subgroups $N$ of $G$ which act trivially on $V$.
\end{abstract}

\maketitle


\section{Introduction}
\label{s:intro}
\setcounter{equation}{0}
\setcounter{figure}{0}

Let $k$ be an algebraically closed field of positive characteristic $p$, let $W=W(k)$ be the ring
of infinite Witt vectors over $k$, and let $G$ be a finite group.
It is a classical question to ask whether a given finitely generated $kG$-module $V$ can be lifted
to $W$ or to a more general complete local commutative Noetherian ring $R$ with residue field $k$.
It is useful to formulate this question in terms of deformation rings.  For example, Green's liftability
theorem can be stated as saying that there is a 
surjection from the versal deformation ring $R(G,V)$ of $V$ to $W$ 
if there are no non-trivial $2$-extensions of $V$ by itself. 
A natural question is then to determine $R(G,V)$.
In this paper, we determine $R(G,V)$ for certain mod $2$ representations $V$ of finite groups $G$ 
which belong to $2$-modular blocks of $G$ with generalized quaternion defect groups.

The topological ring $R(G,V)$ is characterized by the property that every lift of $V$ over
a complete local commutative Noetherian ring $R$ with residue field $k$ arises
from a local ring homomorphism $\alpha: R(G,V)\to R$ and that $\alpha$ is unique
if $R$ is the ring of dual numbers $k[\epsilon]/(\epsilon^2)$. In case $\alpha$ is unique
for all $R$, $R(G,V)$ is called the universal deformation ring of $V$.
For precise definitions, see \S\ref{s:prelim}. 
Note that all these rings $R$, including $R(G,V)$, have a natural structure as $W$-algebras.

One of the main motivations for studying universal deformation rings for finite groups is that they
provide a good test case for various conjectures concerning the ring theoretic properties
of universal deformation rings for profinite Galois groups. 
For example, in \cite{bc4.9,bc5} it was shown that the universal deformation ring of the non-trivial
irreducible mod $2$ representation of the symmetric group $S_4$ is not a complete intersection.
This led to infinitely many examples of real quadratic fields $L$ such that the universal deformation 
ring of the inflation of this representation to the  Galois group over $L$ of the 
maximal totally unramified extension of $L$ is not a complete intersection.
The advantage of computing universal deformation rings for representations of finite groups
is that one can use deep results from modular representation theory due to Brauer, 
Erdmann \cite{erd}, Linckelmann \cite{linckel,linckel1}, Carlson-Th\'{e}venaz \cite{carl2,carl1.5}, and 
others. 

Suppose now that $G$ is an arbitrary finite group and $V$ is a 
finitely generated $kG$-module such that the stable 
endomorphism ring $\underline{\mathrm{End}}_{kG}(V)$ is isomorphic to $k$. 
By \cite[Prop. 2.1]{bc}, it follows that 
the versal deformation ring $R(G,V)$ of $V$ is universal. In \cite{bc}, the following question was
raised which would relate $R(G,V)$ to the defect groups
of the block of $kG$ associated to $V$.

\begin{question}
\label{qu:main} 
Let $B$ be a block of $kG$, let $D$ be a defect group of $B$, and suppose $V$ is a finitely generated 
$kG$-module whose stable endomorphism ring is isomorphic to $k$ 
such that the unique $($up to isomorphism$)$ non-projective indecomposable summand of $V$ 
belongs to $B$. Is the universal deformation ring $R(G,V)$ of $V$ isomorphic to a subquotient ring 
of the group ring $WD$?
\end{question}

In \cite{bl,diloc,3sim,bc,bllo}, the isomorphism types of $R(G,V)$ have been determined for $V$ 
belonging to cyclic blocks, respectively to various tame blocks with dihedral defect groups. 
It was shown that Question \ref{qu:main} has a positive answer in all these cases.
Moreover, in \cite[Cor. 5.1.2]{3sim} an infinite series of  finite  groups $G$ and mod $2$ 
representations $V$  was given for which $R(G,V)$ is not a complete intersection.

In this paper, we consider the case when $k$ has characteristic $2$ and 
$B$ is a block of $kG$ with generalized quaternion defect groups of order $2^{d+1}\ge 8$
such that the center of $G/O_B$ has even order, where $O_B$ is the maximal
normal subgroup of $G$ of odd order which acts trivially on all $kG$-modules belonging to $B$.
Let $z\in G$ be an involution such that $zO_B$ lies in the center of $G/O_B$, let $N=\langle O_B,z\rangle$
and let $\overline{B}$ be a block of $k[G/N]$ which is contained in the image of $B$ under the 
natural surjection $\pi:kG\to k[G/N]$. 
Then $\overline{B}$ has dihedral defect groups of order $2^d\ge 4$, and 
$B$ and $\overline{B}$ have the same number of isomorphism classes of simple modules
(see Remark \ref{rem:trythis}). 
In \cite{brauer2} (resp. \cite{olsson}), Brauer (resp. Olsson) proved that a block with 
dihedral (resp. generalized quaternion) defect groups contains at most three simple modules up to 
isomorphism. In this paper, we consider the largest case when both $B$ and $\overline{B}$ have
precisely three isomorphism classes of simple modules.

Note that all principal blocks
with generalized quaternion defect groups and precisely three isomorphism classes of
simple modules are included in our discussion (see Remark \ref{rem:principal}).
If $B$ is principal and $d\ge 3$, then $\overline{B}$ is one of the blocks considered in \cite{3sim} 
(see Remark \ref{rem:oherdmann}).

We now summarize our main result;  a more detailed statement can be found in 
Theorem \ref{thm:altogether}. As above, if $B$ is a block of $kG$ then $O_B$ is the maximal 
normal subgroup of $G$ of odd order which acts trivially on all $kG$-modules belonging to $B$.
\begin{thm}
\label{thm:main}
Suppose $k$ has characteristic $2$, $G$ is a finite group, and $B$ is a block of $kG$ with a
generalized quaternion defect group $D$ of order $2^{d+1}\ge 8$ such that 
there are precisely three isomorphism classes of simple $B$-modules and such that
the center of $G/O_B$ has even order.
Let $z\in G$ be an involution such that $zO_B$ is central in $G/O_B$, let $N=\langle O_B,z\rangle$ 
and let $\overline{B}$ be a block of 
$k[G/N]$ which is contained in the image of $B$ under the natural surjection $\pi:kG\to k[G/N]$.
Let $V$ be a finitely generated $kG$-module whose stable endomorphism ring is isomorphic to $k$ 
and which is inflated from a $k[G/N]$-module belonging to $\overline{B}$.
Then either
\begin{enumerate}
\item[(i)] $R(G,V)/2R(G,V)\cong k$, in which case $R(G,V)$ is isomorphic to $W$, or
\item[(ii)] $R(G,V)/2R(G,V)\cong k[[t]]/(t^{2^{d-1}-1})$, in which case  $R(G,V)\cong W[[t]]/(p_{d+1}(t))$ 
for a certain monic polynomial $p_{d+1}(t)\in W[t]$ of degree $2^{d-1}-1$ whose non-leading 
coefficients are all divisible by $2$.
\end{enumerate}
In all cases, $R(G,V)$ is isomorphic to a subquotient ring of $WD$ and a complete intersection.
\end{thm}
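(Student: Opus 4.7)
The strategy is to compare $R(G,V)$ to the already-understood deformation ring $R(G/N,\overline{V})$ of the dihedral block $\overline{B}$, and to exploit the additional structure imposed by the central involution $z$. As a first reduction I would replace $G$ by $G/O_B$ (harmless since $O_B$ has odd order and acts trivially on every module belonging to $B$), so one may assume $O_B=1$ and $N=\langle z\rangle$ has order $2$. Since $V=\mathrm{Inf}(\overline{V})$, every lift of $\overline{V}$ over a complete local coefficient ring $R$ inflates to a lift of $V$ over $R$, and the resulting injection of deformation functors is represented by a surjection of $W$-algebras $R(G,V)\twoheadrightarrow R(G/N,\overline{V})$. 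The ring on the right is described in \cite{3sim}, so the remaining task is to compute the kernel and verify universality.

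The second step is to determine the tangent space $\mathrm{Ext}^1_{kG}(V,V)$ and bound the obstruction space $\mathrm{Ext}^2_{kG}(V,V)$ via the Lyndon--Hochschild--Serre spectral sequence for $1\to N\to G\to G/N\to 1$. Because $N$ is central of order $2$ acting trivially on $V$, one has $\mathrm{H}^q(N,\mathrm{End}_k(V))\cong\mathrm{End}_k(V)$ for all $q\ge 0$ with the conjugation $G/N$-action, and the $E_2$-page is controlled by the known $\overline{B}$-side Ext groups paired with the cohomology of $G/N$ acting on $\mathrm{End}_k(V)$. The hypothesis $\underline{\mathrm{End}}_{kG}(V)\cong k$, combined with the five-term exact sequence, should show that $\dim_k\mathrm{Ext}^1_{kG}(V,V)$ exceeds $\dim_k\mathrm{Ext}^1_{k[G/N]}(\overline{V},\overline{V})$ by exactly $0$ or $1$, precisely giving the dichotomy between cases (i) and (ii). A parallel estimate on $\mathrm{Ext}^2$ would bound the number of relations, forcing $R(G,V)$ to admit a presentation either as $W$ or as a quotient of $W[[t]]$ by a single relation.

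The third step is to exhibit explicit lifts of $V$ realizing the predicted rings. Any lift $\tilde{V}$ over a $W$-algebra $R$ has $z$ acting as an involution $\tilde{z}\equiv I\pmod{m_R}$; after inverting $2$, the orthogonal idempotents $(I\pm\tilde{z})/2$ decompose $\tilde{V}$ into $\pm 1$-eigenspaces of $\tilde{z}$, the $+1$-eigenspace being a lift of $\overline{V}$ and the $-1$-eigenspace a ``twisted'' lift (still compatible with $V$ because $-1\equiv 1\pmod{2}$). Indecomposability of $V$ forced by $\underline{\mathrm{End}}_{kG}(V)\cong k$ tightly constrains this decomposition. I would then exploit the central block decomposition $WD=WD\cdot\frac{1+c}{2}\oplus WD\cdot\frac{1-c}{2}$, where $c\in D$ is the unique central involution: the first summand accounts for inflated lifts from $\overline{V}$, while the second summand, which over a sufficiently large extension splits into $2^{d-1}-1$ Galois-conjugate $2\times 2$ matrix blocks corresponding to the $2$-dimensional irreducible representations of $D$ on which $c$ acts as $-1$, provides a one-parameter family of ``extra'' lifts. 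The joint minimal polynomial over $W$ of a generator of the maximal cyclic subgroup $\langle x\rangle\subset D$ of order $2^d$ acting on this summand is the polynomial $p_{d+1}(t)$, whose non-leading coefficients are divisible by $2$ by Galois conjugacy modulo~$2$. This construction simultaneously realizes $R(G,V)$ as a subquotient of $WD$ and exhibits it as a complete intersection.

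The main obstacle will be the third step: matching the explicit lifts coming from $WD$ against the abstract deformation theory of $V$, ruling out spurious lifts of $V$ not accounted for by the construction, and correctly identifying the normalization of $p_{d+1}(t)$. This requires combining Erdmann's classification \cite{erd} of tame blocks with three simples with delicate obstruction calculus. The contrast with the possibly non-complete-intersection $R(G/N,\overline{V})$ of \cite{3sim} is ultimately explained by the extra relation that the identity $\tilde{z}^2=1$ imposes on deformations as one passes from $G/N$ back to $G$.
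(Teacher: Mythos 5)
Your strategy has the right flavor, and the abstract surjection $R(G,V)\twoheadrightarrow R(G/N,\overline{V})$ you posit does exist (inflation of deformations plus the LHS five-term sequence makes the cotangent map surjective), but two of your three steps fall short of what the theorem actually requires, and the gaps are serious.

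The central difficulty your step 2 does not overcome is that you need $R(G,V)/2R(G,V)$ \emph{exactly}, namely $k$ or $k[[t]]/(t^{2^{d-1}-1})$, not just the dimension of the tangent space $\mathrm{Ext}^1_{kG}(V,V)$. Knowing $\dim\mathrm{Ext}^1$ and a bound on $\dim\mathrm{Ext}^2$ gives a presentation of $R(G,V)$ as a quotient of $W[[t]]$ by at most $\dim\mathrm{Ext}^2$ relations, but it cannot distinguish $k[[t]]/(t^m)$ for varying $m$, and it certainly cannot detect whether one gets a complete intersection. Indeed the paper's Lemma~\ref{lem:moreIII} shows that the parallel ring $R(G/N,\overline{V})$ is \emph{not} a complete intersection in some of these cases, so a naive obstruction count with $\mathrm{Ext}^2$ would mislead. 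What the paper actually uses to nail the mod-$2$ ring is Lemma~\ref{lem:trythis}: an explicit uniserial quotient $\overline{U}$ of a projective cover (of length $4(2^{d-1}-1)$ in family (I)) furnishes the universal mod-$2$ deformation $k[t]/(t^{2^{d-1}-1})$, with the nilpotency degree read off directly from Erdmann's quiver relations. Your LHS analysis also mis-predicts the dichotomy: in case (ii) $\mathrm{Ext}^1_{kG}(V,V)$ and $\mathrm{Ext}^1_{k[G/N]}(\overline{V},\overline{V})$ are both one-dimensional — they do not differ by $1$ — and to see this from the five-term sequence you would need to control the transgression into $\mathrm{Ext}^2_{k[G/N]}(\overline{V},\overline{V})$, which you have not done.

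Your step 3 is a good heuristic but the eigenspace decomposition of a lift $\widetilde{V}$ after inverting $2$ does not by itself identify the deformation ring: it shows the \emph{rational} type of a candidate lift, not that such a lift is universal or that every lift is captured by it. The paper realizes this idea concretely through Olsson's ordinary-character theory for quaternion blocks (\S\ref{s:ordinaryquaternion}): the $2^{d-1}-1$ exceptional characters $\chi_{7,i}$ supply the $F$-character $\sum\rho_\ell$ of a $WG$-module $U'$, Lemma~\ref{lem:quaternionargument} identifies $\mathrm{End}_{WG}(U')\cong R'=W[[t]]/(p_{d+1}(t))$ via the action of the class sum of $\sigma$ (precisely the subalgebra of $\prod_\ell W[\zeta_{2^\ell}+\zeta_{2^\ell}^{-1}]$ you describe), and Lemma~\ref{lem:quaterniontrick} exhibits $R'$ as a subquotient of $WD$. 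Constructing $U'$ as a lift of $\overline{U}$ over $W$ is itself nontrivial and uses the decomposition matrices and the lifting criterion of Lemma~\ref{lem:Wlift}. To make your step 3 rigorous you would essentially have to reproduce this machinery — the free-over-$R'$ analysis, the comparison $R(G,V)\twoheadrightarrow R'$ then equality because both mod-$2$ reductions coincide — which is where the real work of the paper lies.

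Finally, a smaller point: you cannot simply cite \cite{3sim} for the target ring $R(G/N,\overline{V})$, since family (III) with $d\ge 4$ was excluded there; the paper has to supply Lemma~\ref{lem:moreIII} separately for exactly this reason.
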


It is an interesting question how the universal deformation ring changes
when one inflates a module $V$ from a quotient group of $G$  to $G$. 
Theorem \ref{thm:main} together with the results in \cite{bl,3sim}
and Lemma \ref{lem:moreIII}
give an answer to this question for the quotient group $G/N$.
Namely, using the notation of Theorem \ref{thm:main}, 
the universal deformation ring $R(G/N,V)$ is as follows.
\begin{itemize}
\item If $V$ is as in Theorem \ref{thm:main}(i), then $R(G/N,V)$ is isomorphic to a quotient ring
of $W$.
\item If $V$ is as in Theorem \ref{thm:main}(ii), then $R(G/N,V)\cong W[[t]]/(t\,p_d(t),2\,p_d(t))$,
where we set $p_2(t)=1$. In particular, if $d\ge 3$, $R(G/N,V)$ is not a complete 
intersection.
\end{itemize}

\medbreak

It is natural  to ask whether Theorem \ref{thm:main} can be used to construct 
deformation rings arising from arithmetic in the following sense.
Suppose $L$ is a number field and $S$ is a finite set of places of $L$. Let $L_S$ be
the maximal algebraic extension of $L$ which is unramified outside $S$, and denote
by $G_{L,S}$ the Galois group of $L_S$ over $L$.  Suppose $k$, $G$, $G/N$ and $V$ are
as in Theorem \ref{thm:main}, and let $H$ be $G$ or $G/N$.
As in \cite{bc5}, one can ask whether there are $L$ and $S$ such 
that there is a surjection $\psi:G_{L,S}\to H$ which induces an isomorphism 
of deformation rings $R(G_{L,S},V)\to R(H, V)$ 
when $V$ is viewed as a representation for $G_{L,S}$ via $\psi$. It was shown in 
\cite[Lemma 3.3]{bc5} that a sufficient condition for $R(G_{L,S},V)\to R(H, V)$ to be an isomorphism 
for all such $V$ is that  $\mathrm{Ker}(\psi)$ has no non-trivial pro-$2$ quotient. If this condition is
satisfied, we say the group $H$ caps $L$ for $2$ at $S$. 

The prototypes for the groups $G/N$ are $\mathrm{PSL}_2(\mathbb{F}_q)$ where $q$ is an
odd prime power and the alternating group $A_7$. Suppose $G/N$ is one of these groups  and that
$8$ divides the order of $G/N$. One can show similarly to the proof of \cite[Thm. 3.7(i)]{bc5}
that $G/N$ does not cap $\mathbb{Q}$ for $2$ at any finite set
of places $S$ of $\mathbb{Q}$. However, it is an interesting question whether the group
$G$ does cap $\mathbb{Q}$ for $2$ at certain $S$. For example, it was shown in
\cite{bcf} that although the symmetric group $S_4$ does not cap $\mathbb{Q}$ for $2$
at any $S$, the double cover $\hat{S}_4$ does cap $\mathbb{Q}$ for $2$ at certain $S$.
This situation was similar to the present one, since the universal deformation ring of 
the simple $kS_4$-module $E$ of dimension $2$ is $R(S_4,E)\cong W[[t]]/(t^2,2t)=
W[[t]]/(t\,p_3(t),2\,p_3(t))$, but the universal deformation ring of the inflation of $E$ to $\hat{S}_4$ 
is $R(\hat{S}_4,E)\cong W[[t]]/(t^3-2t)=W[[t]]/(p_4(t))$.

The paper is organized as follows. In \S\ref{s:prelim}, we recall the definitions of deformations and 
deformation rings and we state some useful results from \cite{3sim,bc}.
In \S\ref{s:quaternionsylow}, we concentrate on 
finite groups $G$ and their quotient groups $G/N$ as in Theorem \ref{thm:main} 
and use  \cite{erd} to describe the $2$-modular blocks $B$ and $\overline{B}$.
In \S\ref{s:stablend}, we determine all indecomposable $kG$-modules $V$ whose stable
endomorphism rings are isomorphic to $k$ and which are inflated from $k[G/N]$-modules belonging to
$\overline{B}$. In particular, we show that the endomorphism ring of each such $V$ is isomorphic to $k$,
leading to a finite set of possibilities for such $V$ (see Remark \ref{rem:stablend}). 
In \S\ref{s:ordinaryquaternion}, we describe results of Olsson \cite{olsson} about the ordinary 
irreducible characters of $G$ belonging to $B$. 
In \S\ref{s:udr}, we determine the universal deformation rings of all the $kG$-modules $V$ found in 
\S\ref{s:stablend}, by first determining the universal deformation rings modulo $2$ and then using 
Olsson's results on ordinary characters to complete the computation
(see Theorem \ref{thm:altogether}).


\section{Preliminaries}
\label{s:prelim}
\setcounter{equation}{0}
\setcounter{figure}{0}

In this section, we give a brief introduction to versal and universal deformation rings and deformations. 
For more background material, we refer the reader to \cite{maz1} and \cite{lendesmit}.

Let $k$ be an algebraically closed field of characteristic $p>0$, let $W$ be the ring of infinite Witt 
vectors over $k$, and let $F$ be the fraction field of $W$. 
Let $\hat{\mathcal{C}}$ be the category of all complete local commutative Noetherian 
rings with residue field $k$. The morphisms in $\hat{\mathcal{C}}$ are continuous $W$-algebra 
homomorphisms which induce the identity map on $k$.

Suppose $G$ is a finite group and $V$ is a finitely generated $kG$-module. 
A lift of $V$ over an object $R$ in $\hat{\mathcal{C}}$ is a pair $(M,\phi)$ where $M$ is a finitely 
generated $RG$-module which is free over $R$, and $\phi:k\otimes_R M\to V$ is an isomorphism of 
$kG$-modules. Two lifts $(M,\phi)$ and $(M',\phi')$ of $V$ over $R$ are isomorphic if there is an 
isomorphism $f:M\to M'$ with $\phi=\phi'\circ (k\otimes f)$. The isomorphism class $[M,\phi]$ of a lift 
$(M,\phi)$ of $V$ over $R$ is called a deformation of $V$ over $R$, and the set of all such deformations 
is denoted by $\mathrm{Def}_G(V,R)$. The deformation functor
$$\hat{F}_V:\hat{\mathcal{C}} \to \mathrm{Sets}$$ 
is a covariant functor which
sends an object $R$ in $\hat{\mathcal{C}}$ to $\mathrm{Def}_G(V,R)$ and a morphism 
$\alpha:R\to R'$ in $\hat{\mathcal{C}}$ to the map $\mathrm{Def}_G(V,R) \to
\mathrm{Def}_G(V,R')$ defined by $[M,\phi]\mapsto [R'\otimes_{R,\alpha} M,\phi_\alpha]$, where  
$\phi_\alpha=\phi$ after identifying $k\otimes_{R'}(R'\otimes_{R,\alpha} M)$ with $k\otimes_R M$.

Suppose there exists an object $R(G,V)$ in $\hat{\mathcal{C}}$ and a deformation 
$[U(G,V),\phi_U]$ of $V$ over $R(G,V)$ with the following property:
For each $R$ in $\hat{\mathcal{C}}$ and for each lift $(M,\phi)$ of $V$ over $R$ there exists 
a morphism $\alpha:R(G,V)\to R$ in $\hat{\mathcal{C}}$ such that $\hat{F}_V(\alpha)([U(G,V),\phi_U])=
[M,\phi]$, and moreover $\alpha$ is unique if $R$ is the ring of dual numbers
$k[\epsilon]/(\epsilon^2)$. Then $R(G,V)$ is called the versal deformation ring of $V$ and 
$[U(G,V),\phi_U]$ is called the versal deformation of $V$. If the morphism $\alpha$ is
unique for all $R$ and all lifts $(M,\phi)$ of $V$ over $R$, 
then $R(G,V)$ is called the universal deformation ring of $V$ and $[U(G,V),\phi_U]$ is 
called the universal deformation of $V$. In other words, $R(G,V)$ is universal if and only if
$R(G,V)$ represents the functor $\hat{F}_V$ in the sense that $\hat{F}_V$ is naturally isomorphic to 
the Hom functor
$\mathrm{Hom}_{\hat{\mathcal{C}}}(R(G,V),-)$. 

By \cite{maz1}, every finitely generated $kG$-module $V$ has a versal deformation ring.
By a result of Faltings (see \cite[Prop. 7.1]{lendesmit}), $V$ has a universal deformation ring if 
$\mathrm{End}_{kG}(V)=k$.

\begin{rem}
\label{rem:deformations}
Note that the above definition of deformations differs from the definition used in \cite{bl,bc} as follows.
Let $G$ and $V$ be as above, let $R$ be an object in $\hat{\mathcal{C}}$ and let $(M,\phi)$
be a lift of $V$ over $R$. In \cite{bl,bc}, the isomorphism class of $M$ as an $RG$-module was called a 
deformation of $V$ over $R$, without taking into account the specific isomorphism 
$\phi:k\otimes_RM\to V$. Some authors call a deformation according to the latter definition 
a weak deformation of $V$ over $R$ (see \cite[\S 5.2]{keller}). 

In general, a weak deformation of $V$ over $R$ identifies more lifts than a deformation of $V$ over $R$
that respects the isomorphism $\phi$ of a representative $(M,\phi)$.
However, if the stable
endomorphism ring $\underline{\mathrm{End}}_{kG}(V)$ is isomorphic to $k$, these two definitions
of deformations coincide. This can be seen as follows. 

Suppose first that $\alpha:A\to A_0$ is a surjective
morphism of Artinian rings $A,A_0$ in $\hat{\mathcal{C}}$ and $(M,\phi)$ is a lift of $V$ over
$A$. Let $(M_0,\phi_0)=(A_0\otimes_{A,\alpha}M,\phi_\alpha)$ and let $u_0\in\mathrm{End}_{A_0G}(M_0)$ be
such that $u_0$ factors through a projective $A_0G$-module $P_0$, say $u_0=h_0\circ g_0$. 
We claim that $u_0$ can be lifted to an $AG$-module endomorphism $u$ of $M$ that factors 
through a projective $AG$-module $P$.
By using induction on the length of $A$, it is enough to prove this when $\alpha$ is a small extension, i.e.
when the kernel of $\alpha$ is a non-zero
principal ideal $tA$ where $t$ is annihilated by the maximal ideal $m_A$ of $A$. 
Since $M_0$ is a finitely generated $A_0G$-module, we can assume that $P_0$ is a finitely 
generated projective $A_0G$-module. Let $P$ be a projective $AG$-module with $A_0\otimes_{A,\alpha}
P\cong P_0$. 
Consider the short exact sequence of $AG$-modules $0\to tP\to P\xrightarrow{\alpha_P} P_0\to 0$ which 
results from tensoring $0\to tA \to A \xrightarrow{\alpha} A_0\to 0$ with $P$ over $A$.
Since $k\otimes_A P$ is an injective $kG$-module, we have that
$\mathrm{Ext}^1_{AG}(M,tP)\cong \mathrm{Ext}^1_{kG}(k\otimes_AM,k\otimes_A P)= 0$. Therefore,
$\mathrm{Hom}_{AG}(M,P)\xrightarrow{(\alpha_P)_*}\mathrm{Hom}_{AG}(M,P_0)$ is surjective.
This implies that $g_0:M_0\to P_0$ can be
lifted to an $AG$-module homomorphism $g:M\to P$. Since $P$ is a projective $AG$-module, it
follows that $h_0:P_0\to M_0$ can be lifted to an $AG$-module homomorphism $h:P\to M$.
Hence $u=h\circ g$ is an $AG$-module endomorphism of $M$ which factors through $P$ and which
lifts $u_0$. Since by \cite[Lemma 2.3]{bc} the ring homomorphism 
$\sigma_M:A\to \underline{\mathrm{End}}_{AG}(M)$ coming from the action of
$A$ on $M$ via scalar multiplication is surjective, it follows that every endomorphism 
$f_0\in\mathrm{End}_{A_0G}(M_0)$ can be lifted to an endomorphism $f\in\mathrm{End}_{AG}(M)$.

Now suppose that $R$ is an arbitrary ring in $\hat{\mathcal{C}}$, i.e.
$R=\displaystyle\lim_{\longleftarrow} R/m_R^n$ where $m_R$ is the
maximal ideal of $R$. If $(M,\phi)$ and $(M',\phi')$ are two lifts of $V$ over $R$, then
$((R/m_R^n)\otimes_RM,\phi_n)$ and $((R/m_R^n)\otimes_RM',\phi'_n)$ are lifts of $V$ over
$R/m_R^n$ for all $n$, where $\phi_n=\phi$ (resp. $\phi'_n=\phi'$)
after identifying $k\otimes_{R/m_R^n}((R/m_R^n)\otimes_R M)$ with $k\otimes_R M$ (resp.
$k\otimes_{R/m_R^n}((R/m_R^n)\otimes_R M')$ with $k\otimes_R M'$). 
Suppose there exists an $RG$-module isomorphism $f:M\to M'$. Then 
$f_n=(R/m_R^n)\otimes_Rf:(R/m_R^n)\otimes_RM\to (R/m_R^n)\otimes_RM'$ is an 
$(R/m_R^n)G$-module isomorphism for all $n$.
Define $g_1=\phi^{-1}\circ\phi'\circ f_1$, which is a $kG$-module automorphism of $k\otimes_RM$.
By what we showed above, we can inductively lift $g_1$ to an $(R/m_R^n)G$-module endomorphism
$g_n$ of $(R/m_R^n)\otimes_R M$ such that $g_{n+1}$ lifts $g_n$ for all $n$. Since $k\otimes g_n
=g_1$ is an automorphism of $k\otimes_R M$ and $(R/m_R^n)\otimes_R M$ is free over $R/m_R^n$, 
it follows that
$g_n$ is an $(R/m_R^n)G$-module automorphism of $(R/m_R^n)\otimes_R M$. For all $n\ge 1$, define
$h_n=f_n\circ g_n^{-1}$. Then $h_n: (R/m_R^n)\otimes_RM\to (R/m_R^n)\otimes_RM'$ is an 
$(R/m_R^n)G$-module isomorphism such that $\phi'_n\circ (k\otimes h_n)= \phi_n$
and such that $h_{n+1}$ lifts $h_n$ for all $n$. Hence $\{h_n\}$
defines an inverse system and its inverse limit defines an $RG$-module isomorphism
$h:M\to M'$ with $\phi'\circ (k\otimes h)=\phi$. In particular, $[M,\phi]=[M',\phi']$.
\end{rem}

The following three results were proved in \cite{bc} and in \cite{3sim}, respectively. 
Here $\Omega$ denotes
the syzygy, or Heller, operator for $kG$ (see for example \cite[\S 20]{alp}).

\begin{prop}
\label{prop:stablendudr}
{\rm (\cite[Prop. 2.1]{bc}).}
Suppose $V$ is a finitely generated $kG$-module whose stable endomorphism ring 
$\underline{\mathrm{End}}_{kG}(V)$ is isomorphic to $k$.  Then $V$ has  a universal 
deformation ring $R(G,V)$.
\end{prop}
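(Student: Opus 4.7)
The plan is to build on Mazur's existence theorem, which already furnishes the versal deformation ring $R(G,V)$ together with a versal deformation $[U(G,V),\phi_U]$. Versality supplies, for any $R\in\hat{\mathcal{C}}$ and any lift $(M,\phi)$ of $V$ over $R$, a classifying morphism $\alpha:R(G,V)\to R$ and uniqueness in the special case $R=k[\epsilon]/(\epsilon^2)$. What remains is therefore to upgrade uniqueness to all $R\in\hat{\mathcal{C}}$. Writing $R=\varprojlim R/m_R^n$ and using continuity of morphisms in $\hat{\mathcal{C}}$, it suffices to prove uniqueness when $R$ is Artinian.

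I would then argue by induction along small extensions $\pi:A\to A_0$ in $\hat{\mathcal{C}}$, with $\ker\pi=tA$ and $m_A\cdot tA=0$. Assuming uniqueness has been established with target $A_0$, suppose $\alpha_1,\alpha_2:R(G,V)\to A$ both send $[U,\phi_U]$ to the same deformation $[M,\phi]\in\mathrm{Def}_G(V,A)$. Reducing modulo $t$ and applying the inductive hypothesis gives $\alpha_1\equiv\alpha_2\pmod{t}$, so the $W$-linear difference $\delta:=\alpha_1-\alpha_2$ annihilates $m_{R(G,V)}^2$, factors through $tA\cong k$, and defines a class in the tangent space of $R(G,V)$, identified with an element of $\mathrm{Ext}^1_{kG}(V,V)$.

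The heart of the proof is to show $\delta=0$. The equality $[M,\phi]_{\alpha_1}=[M,\phi]_{\alpha_2}$ provides an $AG$-module isomorphism $h:\alpha_{1,*}U\to\alpha_{2,*}U$ compatible with the common identification with $(M,\phi)$, and tracing $h$ through the tangent-cotangent duality produces an explicit cocycle representative of $\delta$. Here the hypothesis $\underline{\mathrm{End}}_{kG}(V)\cong k$ enters decisively: the structural map $\sigma_M:A\to\underline{\mathrm{End}}_{AG}(M)$ is surjective by \cite[Lemma 2.3]{bc}, and Remark \ref{rem:deformations} shows that any $A_0G$-endomorphism factoring through a projective $A_0G$-module lifts through $\pi$ to an $AG$-endomorphism which still factors through a projective $AG$-module. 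Combining these facts lets one adjust $h$, first by a projective-factoring endomorphism and then by the scalar action of a unit of $A$, until it becomes an identity of strict deformations, which forces the cocycle representing $\delta$ to be a coboundary, hence $\delta=0$.

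The principal obstacle is this last adjustment step: recognizing that every $AG$-automorphism of a lift which reduces to the identity on $V$ decomposes, up to a projective-factoring correction, as scalar multiplication by a unit of $A$. This is exactly where the stable-endomorphism hypothesis is used, and the bookkeeping carried out in Remark \ref{rem:deformations} handles the passage through small extensions; once that work is granted, the induction proceeds formally and yields universality of $R(G,V)$.
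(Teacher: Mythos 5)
The paper does not reprove this proposition---it simply cites \cite[Prop.\ 2.1]{bc}---but Remark~\ref{rem:deformations} lays out precisely the endomorphism-lifting machinery your sketch relies on, and your outline (reduction to Artinian targets, induction along small extensions, the surjectivity of $\sigma_M$ from \cite[Lemma 2.3]{bc}, and the projective-factoring lifts of Remark~\ref{rem:deformations}) is in substance the argument underlying that citation. The final step can be made precise by replacing $h$ with $h\circ u^{-1}$, where $u$ is an automorphism of $\alpha_{1,*}U$ lifting $h\bmod t$ (such a $u$ exists by $\sigma_M$-surjectivity plus the projective-factoring lift, and it reduces to $\mathrm{id}_V$); the $G$-equivariance of $h\circ u^{-1}$, now congruent to the identity modulo $t$, exhibits a cochain whose coboundary is the class of $\delta$, which is the cleaner form of your ``adjust until it becomes the identity'' step.
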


\begin{lemma} 
\label{lem:defhelp}
{\rm (\cite[Cors. 2.5 and 2.8]{bc}).}
Let $V$ be a finitely generated $kG$-module with $\underline{\mathrm{End}}_{kG}(V)\cong k$.
\begin{enumerate}
\item[(i)] Then $\underline{\mathrm{End}}_{kG}(\Omega(V))\cong k$, and $R(G,V)$ and $R(G,\Omega(V))$ 
are isomorphic.
\item[(ii)] There is a non-projective indecomposable $kG$-module $V_0$ $($unique up to
isomorphism$)$ such that $\underline{\mathrm{End}}_{kG}(V_0)\cong k$, $V$ is isomorphic to 
$V_0\oplus P$ for some projective $kG$-module $P$, and $R(G,V)$ and $R(G,V_0)$ are 
isomorphic.
\end{enumerate}
\end{lemma}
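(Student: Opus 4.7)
The plan is to prove parts (i) and (ii) separately, leveraging the facts that $kG$ is self-injective (so $\Omega$ is an auto-equivalence of the stable module category and projective modules split off nicely) and that weak deformations coincide with deformations under the stable-endomorphism-equals-$k$ hypothesis, as established in Remark \ref{rem:deformations}.

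For (i), I would first recall that because $\Omega$ is an auto-equivalence of the stable category $\underline{kG\text{-mod}}$, there is a natural isomorphism of rings $\underline{\mathrm{End}}_{kG}(\Omega V)\cong\underline{\mathrm{End}}_{kG}(V)\cong k$, so both $V$ and $\Omega V$ have universal deformation rings by Proposition \ref{prop:stablendudr}. To get $R(G,V)\cong R(G,\Omega V)$, I would construct a natural bijection between $\mathrm{Def}_G(V,R)$ and $\mathrm{Def}_G(\Omega V, R)$ for every $R$ in $\hat{\mathcal{C}}$. Given a lift $(M,\phi)$ of $V$ over $R$, I would lift the projective cover $P\twoheadrightarrow V$ to a projective cover $\widetilde{P}\twoheadrightarrow M$ of $RG$-modules (possible since projective covers of finitely generated modules over complete local rings exist and lift $k\otimes_R\widetilde{P}\cong P$), and define $\Omega_R M$ as the kernel. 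The short exact sequence $0\to\Omega_R M\to\widetilde{P}\to M\to 0$ is $R$-split because $M$ is $R$-free, so $\Omega_R M$ is $R$-free, and tensoring with $k$ recovers $\Omega V$ canonically via $\phi$. The inverse construction uses an injective envelope (equivalently $\Omega^{-1}$ from the self-injectivity of $kG$). Checking that these assignments are well-defined on isomorphism classes, mutually inverse, and functorial in $R$ yields a natural isomorphism of functors $\hat{F}_V\cong\hat{F}_{\Omega V}$, which gives the desired isomorphism of universal deformation rings by Yoneda.

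For (ii), Krull--Schmidt applied to $V$ produces a decomposition into indecomposables, and projective summands contribute nothing stably, so $\underline{\mathrm{End}}_{kG}(V)$ decomposes as a sum indexed by non-projective summands. For this ring to be $k$, there can be exactly one non-projective indecomposable summand $V_0$, and it must satisfy $\underline{\mathrm{End}}_{kG}(V_0)\cong k$; uniqueness follows from Krull--Schmidt. To see $R(G,V)\cong R(G,V_0)$, I would show that sending a lift of $V_0$ to its direct sum with the (essentially unique) lift of $P$ to a projective $RG$-module gives a natural bijection on deformations. Concretely, projective $kG$-modules lift uniquely to projective $RG$-modules, and a lift $(M,\phi)$ of $V_0\oplus P$ decomposes as $M_0\oplus\widetilde{P}$: one lifts the idempotent of $\mathrm{End}_{kG}(V_0\oplus P)$ cutting out $P$ to an idempotent of $\mathrm{End}_{RG}(M)$, using that endomorphisms lift thanks to the argument in Remark \ref{rem:deformations} (the surjectivity of $\sigma_M$ and the lifting of endomorphisms factoring through projectives).

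The main obstacle is keeping track of the residual isomorphism $\phi$ throughout. It is easy to match lifts up to \emph{weak} deformation equivalence, but the strong form of the deformation functor $\hat{F}_V$ requires an explicit isomorphism with $V$ after reduction modulo the maximal ideal. The bridge supplied by Remark \ref{rem:deformations}---that under $\underline{\mathrm{End}}_{kG}(V)\cong k$ the weak and strong notions agree---is exactly what permits the lifting of idempotents and of syzygy constructions to respect the chosen $\phi$, and verifying this compatibility (and its behavior under base change $\alpha\colon R\to R'$) is the technical heart of the argument.
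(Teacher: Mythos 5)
Your overall strategy---a natural isomorphism of deformation functors via syzygies for (i), and idempotent-lifting for (ii)---is the standard one and is essentially the argument underlying the cited [bc, Cors.\ 2.5 and 2.8]. However, your proof of (i) has a genuine gap, and it is caused by exactly the phenomenon that (ii) is designed to handle.

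When $V$ has a nonzero projective summand, say $V\cong V_0\oplus Q$ with $Q$ projective and $V_0$ projective-free, the Heller translate $\Omega V$ is projective-free and equals $\Omega V_0$ under the minimal-cover convention. Your forward map sends a lift $(M,\phi)$ of $V$ to $\Omega_R M$, a lift of $\Omega V$; but the proposed inverse, the cosyzygy $\Omega_R^{-1}$, applied to a lift of $\Omega V$ produces a lift of $\Omega^{-1}\Omega V=V_0$, \emph{not} of $V$. So the two constructions are not mutually inverse between $\hat{F}_V$ and $\hat{F}_{\Omega V}$: their composite lands in $\hat{F}_{V_0}$. To repair the inverse you would have to append the unique projective $RG$-lift of $Q$ to $\Omega_R^{-1}N$, and showing that this is well defined and actually inverts $\Omega_R$ is precisely the idempotent-lifting content of (ii). The cleanest route is therefore to prove (ii) first, reduce to the case $V=V_0$ projective-free, and only then run the syzygy/cosyzygy argument, where $\Omega_R^{-1}\Omega_R M\cong M$ holds on the nose.

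Apart from this ordering issue the outline is sound: $\Omega_R M$ is $R$-free because the sequence $0\to\Omega_R M\to\widetilde P\to M\to 0$ is $R$-split ($M$ being $R$-free); minimal projective covers over the complete semiperfect ring $RG$ exist and reduce to projective covers over $kG$; and the surjectivity of $\sigma_M$ invoked in Remark \ref{rem:deformations} is what permits lifting the idempotent of $\mathrm{End}_{kG}(V_0\oplus Q)$ cutting out $Q$. One further point worth making explicit in (i): the deformation class of $\Omega_R M$ must be shown independent of the chosen projective $RG$-cover $\widetilde P$, which again follows from uniqueness of projective lifts over $RG$ by idempotent lifting.
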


\begin{lemma}
\label{lem:Wlift}
{\rm (\cite[Lemma 2.3.2]{3sim}).}
Let $V$ be a finitely generated $kG$-module such that there is a non-split short exact sequence of 
$kG$-modules
$$0\to Y_2\to V\to Y_1\to 0$$
with $\mathrm{Ext}^1_{kG}(Y_1,Y_2)\cong k$.
Suppose that for $i\in\{1,2\}$, there exists a $WG$-module $X_i$ which defines a lift of $Y_i$ over $W$. 
Suppose further that 
$$\mathrm{dim}_F\;\mathrm{Hom}_{FG}(F\otimes_WX_1,F\otimes_WX_2) =
\mathrm{dim}_k\;\mathrm{Hom}_{kG}(Y_1,Y_2)-1.$$
Then there exists a $WG$-module $X$ which defines a lift of $V$ over $W$.
\end{lemma}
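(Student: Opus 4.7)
The plan is to lift the given short exact sequence itself to $WG$. I will construct $X$ by producing a short exact sequence of $WG$-modules $0 \to X_2 \to X \to X_1 \to 0$ reducing modulo $p$ to $0 \to Y_2 \to V \to Y_1 \to 0$. Because $X_1$ and $X_2$ are $W$-free of finite rank, any such extension $X$ is automatically $W$-free, and by flatness $X \otimes_W k \cong V$. So it suffices to show that the class of $V$ in $\mathrm{Ext}^1_{kG}(Y_1, Y_2) \cong k$ lies in the image of the reduction map $\rho\colon \mathrm{Ext}^1_{WG}(X_1, X_2) \to \mathrm{Ext}^1_{WG}(X_1, Y_2)$.

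Applying $\mathrm{Hom}_{WG}(X_1, -)$ to the short exact sequence $0 \to X_2 \xrightarrow{p} X_2 \to Y_2 \to 0$ yields a long exact sequence. Since $X_1$ is $W$-flat, there is a natural identification $\mathrm{Ext}^i_{WG}(X_1, Y_2) \cong \mathrm{Ext}^i_{kG}(Y_1, Y_2)$ for all $i\ge 0$; and since $X_1$ and $X_2$ are $W$-free and finitely generated, $\mathrm{Hom}_{WG}(X_1, X_2)$ is a free $W$-module whose rank equals $\mathrm{dim}_F \mathrm{Hom}_{FG}(F\otimes X_1, F\otimes X_2)$. The long exact sequence then splits into
$$0 \to \mathrm{Hom}_{WG}(X_1, X_2)/p \to \mathrm{Hom}_{kG}(Y_1, Y_2) \to \mathrm{Ext}^1_{WG}(X_1,X_2)[p] \to 0$$
together with an analogous short exact sequence $0 \to \mathrm{Ext}^1_{WG}(X_1, X_2)/p \to \mathrm{Ext}^1_{kG}(Y_1, Y_2) \to \mathrm{Ext}^2_{WG}(X_1,X_2)[p] \to 0$. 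The hypothesis on $\mathrm{Hom}$-dimensions plugged into the first sequence gives $\mathrm{dim}_k \mathrm{Ext}^1_{WG}(X_1, X_2)[p] = 1$.

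Since $\mathrm{Ext}^1_{WG}(X_1, X_2)$ is finitely generated over the complete DVR $W$, the structure theorem gives $\mathrm{dim}_k \mathrm{Ext}^1_{WG}(X_1, X_2)/p \geq \mathrm{dim}_k \mathrm{Ext}^1_{WG}(X_1, X_2)[p] = 1$. The second short exact sequence above exhibits $\mathrm{Ext}^1_{WG}(X_1, X_2)/p$ as a subspace of the one-dimensional $\mathrm{Ext}^1_{kG}(Y_1, Y_2)$, so this dimension is exactly $1$, whence $\rho$ is surjective. Pulling back the class $[V]$ along $\rho$ then produces the desired extension. The main technical point I expect is establishing the identifications $\mathrm{Ext}^i_{WG}(X_1, Y_2) \cong \mathrm{Ext}^i_{kG}(Y_1, Y_2)$ via a finitely generated projective $WG$-resolution of $X_1$ (using that $X_1$ is $W$-flat so that reducing mod $p$ preserves exactness and projectivity), together with the rank comparison for $\mathrm{Hom}_{WG}(X_1, X_2)$; once these are in place, the rest is a short dimension count showing that the obstruction $\delta^1([V])\in\mathrm{Ext}^2_{WG}(X_1,X_2)$ vanishes.
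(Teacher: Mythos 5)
Your argument is correct. The paper does not reprove this lemma but cites \cite[Lemma 2.3.2]{3sim}; your approach — reducing to the surjectivity of the change-of-ring map $\mathrm{Ext}^1_{WG}(X_1,X_2)\to\mathrm{Ext}^1_{kG}(Y_1,Y_2)$, established via the long exact sequence from $0\to X_2\xrightarrow{p}X_2\to Y_2\to 0$ together with the $W$-rank comparison of $\mathrm{Hom}$-modules and the structure theorem over $W$ — is the standard one and is essentially what the cited lemma's proof does. The supporting identifications you invoke all hold: $\mathrm{Hom}_{WG}(X_1,X_2)$ is $W$-free with $\dim_k(\mathrm{Hom}_{WG}(X_1,X_2)/p)=\dim_F\mathrm{Hom}_{FG}(F\otimes X_1,F\otimes X_2)$; a finitely generated projective $WG$-resolution of the $W$-free module $X_1$ reduces mod $p$ to a projective $kG$-resolution of $Y_1$, giving $\mathrm{Ext}^i_{WG}(X_1,Y_2)\cong\mathrm{Ext}^i_{kG}(Y_1,Y_2)$; and for a finitely generated $W$-module $E$ one has $\dim_k(E/pE)\geq\dim_k(E[p])$, which combined with the injection of $\mathrm{Ext}^1_{WG}(X_1,X_2)/p$ into the one-dimensional $\mathrm{Ext}^1_{kG}(Y_1,Y_2)$ forces surjectivity. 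Finally, a preimage $\xi$ of $[V]$ yields an extension $0\to X_2\to X\to X_1\to 0$ whose middle term is $W$-free (it splits over $W$), and one checks that $X/pX$ represents the pushed-forward class, so $X/pX\cong V$.
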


We also need the following result which is a more general version of \cite[Lemma 2.3.1]{3sim}.

\begin{lemma}
\label{lem:trythis}
Let $Y$ be a finitely generated uniserial $kG$-module 
satisfying $\mathrm{End}_{kG}(Y)\cong k$ 
and $\mathrm{Ext}^1_{kG}(Y,Y)\cong k$. Suppose that the radical length of $Y$ is $\ell\ge 1$
and that the composition factors in the descending radical series are given as 
$(T_1,T_2,\ldots, T_\ell)$ where $T_1,\ldots,T_\ell$ are simple $kG$-modules, not necessarily distinct.
Assume there exists an integer $r\ge 2$ such that the projective cover $P_{T_1}$ of $T_1$
has a quotient module $\overline{U}$ which is a uniserial $kG$-module of radical length
$\ell r$ such that the composition factors in the descending radical series are given as
$$(T_1,T_2,\ldots,T_\ell,T_1,T_2,\ldots,T_\ell,\ldots,T_1,T_2,\ldots,T_\ell).$$
Suppose there are $kG$-module isomorphisms  
$\phi: \overline{U} /\mathrm{rad}^{\ell} (\overline{U})\to  Y$ and 
$\psi:\overline{U} /\mathrm{rad}^{\ell (r-1)} (\overline{U})\to
\mathrm{rad}^{\ell}(\overline{U})$, and assume that
$\mathrm{Ext}^1_{kG}(\overline{U},Y)=0$. Then 
the universal deformation ring of $Y$ 
modulo $p$ is $\overline{R}=R(G,Y)/pR(G,Y)\cong k[t]/(t^{r})$, and the universal mod $p$ deformation 
of $Y$ over $\overline{R}$ is 
$[\overline{U},\phi]$
where the action of $t$ on $\overline{U}$ is induced by $\psi$.
\end{lemma}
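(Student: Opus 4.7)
The plan is to establish $\overline{R}:=R(G,Y)/pR(G,Y)\cong k[t]/(t^{r})$ in three steps: first produce the candidate lift of $Y$ over $k[t]/(t^{r})$ given by $\overline{U}$, then show the resulting classifying morphism is surjective, and finally rule out any bigger quotient using $\mathrm{Ext}^{1}_{kG}(\overline{U},Y)=0$.

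First I would endow $\overline{U}$ with a $k[t]/(t^{r})$-action by letting $t$ act as the composition
$$\overline{U}\twoheadrightarrow \overline{U}/\mathrm{rad}^{\ell(r-1)}(\overline{U})\xrightarrow{\;\psi\;}\mathrm{rad}^{\ell}(\overline{U})\hookrightarrow \overline{U}.$$
This commutes with $G$ because $\psi$ is $kG$-linear, and induction using $\psi$ gives $t^{i}\overline{U}=\mathrm{rad}^{i\ell}(\overline{U})$, so $t^{r}\overline{U}=0$. Combining $\overline{U}/t\overline{U}=\overline{U}/\mathrm{rad}^{\ell}(\overline{U})\xrightarrow{\phi}Y$ with $\dim_{k}\overline{U}=r\dim_{k}Y$ and Nakayama shows $\overline{U}$ is free of rank $\dim_{k}Y$ over $k[t]/(t^{r})$, so $(\overline{U},\phi)$ is a lift of $Y$ over $k[t]/(t^{r})$.

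Since $\mathrm{End}_{kG}(Y)\cong k$ and $\mathrm{Ext}^{1}_{kG}(Y,Y)\cong k$, the mod $p$ tangent space of the universal deformation ring is one dimensional, and $\overline{R}\cong k[[t]]/(t^{n})$ for some $n\geq1$. The classifying morphism $\alpha:\overline{R}\to k[t]/(t^{r})$ of the lift just constructed reduces modulo $t^{2}$ to the lift $\overline{U}/\mathrm{rad}^{2\ell}(\overline{U})$, which is a non-split self-extension of $Y$ by uniseriality and hence represents the non-zero class in $\mathrm{Ext}^{1}_{kG}(Y,Y)$. Therefore $\alpha$ is surjective on tangent spaces, so $\alpha$ is surjective and $n\geq r$.

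The heart of the argument is the reverse inequality $n\leq r$. Assuming $n\geq r+1$, I would compose $\overline{R}\twoheadrightarrow k[t]/(t^{r+1})$ (choosing $t\mapsto at$ to lift $\alpha$) with the universal mod $p$ deformation to obtain a lift $\tilde{U}$ of $Y$ over $k[t]/(t^{r+1})$ that reduces modulo $t^{r}$ to $(\overline{U},\phi)$. Multiplication by $t^{r}$ on $\tilde{U}$ yields a $kG$-exact sequence
$$0\to Y\to \tilde{U}\to \overline{U}\to 0,$$
which splits by $\mathrm{Ext}^{1}_{kG}(\overline{U},Y)=0$. Fix such a splitting, identifying $\tilde{U}\cong Y\oplus\overline{U}$ as $kG$-modules with $Y=t^{r}\tilde{U}$ and the projection onto $\overline{U}$ equal to the reduction $\tilde{U}\twoheadrightarrow\tilde{U}/t^{r}\tilde{U}=\overline{U}$. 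Then $t$ annihilates $t^{r}\tilde{U}=Y$ and descends on $\overline{U}$ to the prescribed action $s$ coming from $\psi$, so
$$t(y,u)=(B(u),\,s(u))\qquad\text{for some }B\in\mathrm{Hom}_{kG}(\overline{U},Y).$$
Induction yields $t^{r}(y,u)=(B(s^{r-1}(u)),0)$. Since $r\geq 2$, the image $s^{r-1}(\overline{U})=\mathrm{rad}^{\ell(r-1)}(\overline{U})$ lies inside $\mathrm{rad}^{\ell}(\overline{U})$, and any $kG$-linear $B$ sends $\mathrm{rad}^{\ell}(\overline{U})$ into $\mathrm{rad}^{\ell}(Y)=0$. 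Thus $Bs^{r-1}=0$ and $t^{r}=0$ on $\tilde{U}$, contradicting that $\tilde{U}$ is free of rank $\dim_{k}Y$ over $k[t]/(t^{r+1})$. Hence $n=r$, and identifying $\overline{R}$ with $k[t]/(t^{r})$ via $\alpha$ exhibits $(\overline{U},\phi)$ as the universal mod $p$ deformation.

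The main obstacle is controlling the $t$-action on $\tilde{U}$ after the splitting: the hypothesis $\mathrm{Ext}^{1}_{kG}(\overline{U},Y)=0$ is what pins the off-diagonal structure of $t$, forcing the diagonal entry to be $s$, and then the rigidity that $B$ lands in the short module $Y$ while $s^{r-1}$ lands below $\mathrm{rad}^{\ell}\overline{U}$ is exactly what drives the contradiction and prevents $\overline{R}$ from being any larger than $k[t]/(t^{r})$.
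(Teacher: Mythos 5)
Your proof is correct and follows essentially the same approach as the paper: you construct the lift $(\overline{U},\phi)$ over $k[t]/(t^r)$, argue the classifying map $\alpha$ is surjective, and rule out a larger quotient $k[t]/(t^{r+1})$ by splitting the resulting extension via $\mathrm{Ext}^1_{kG}(\overline{U},Y)=0$ and computing $t^r$ in matrix form. The only cosmetic differences are that you use the tangent-space description (non-splitness of $\overline{U}/\mathrm{rad}^{2\ell}\overline{U}$) for surjectivity of $\alpha$ where the paper invokes indecomposability of $\overline{U}$, and you conclude $Bs^{r-1}=0$ directly from the fact that $kG$-homomorphisms preserve radical layers, whereas the paper reaches the same vanishing via surjectivity of the off-diagonal map together with $\mathrm{End}_{kG}(Y)\cong k$.
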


\begin{proof}
By assumption, $\mathrm{Ext}^1_{kG}(Y,Y)\cong k$, which implies that $\overline{R}\cong k[[t]]$ or
$\overline{R}\cong k[t]/(t^m)$ for some $m\ge 2$. 
The action of $t$ on $\overline{U}$ is given by the composition
$$\overline{U} \xrightarrow{\pi_U} \overline{U}/\mathrm{rad}^{\ell(r-1)} (\overline{U}) 
\xrightarrow{\psi} \mathrm{rad}^{\ell} (\overline{U})\xrightarrow{\iota_U}\overline{U}$$
where $\pi_U$ (resp. $\iota_U$) is the natural surjection (resp. inclusion).
It follows that $\mathrm{rad}^{\ell} (\overline{U})=t \,\overline{U}$ and that
$\overline{U}$ is a free $k[t]/(t^r)$-module under this action. In particular, $\phi$ defines a 
$kG$-module isomorphism $k\otimes_{k[t]/(t^r)}\overline{U}=\overline{U} /\mathrm{rad}^{\ell} 
(\overline{U})\to  Y$, and hence $(\overline{U},\phi)$ defines a lift of $Y$ over $k[t]/(t^r)$.
This means that there is a unique $k$-algebra homomorphism
$$\alpha:\overline{R}\to k[t]/(t^{r})$$
corresponding to the deformation $[\overline{U},\phi]$. Since $\overline{U}$ is indecomposable as a 
$kG$-module, it follows that $\alpha$ is surjective. We now show that $\alpha$ is a $k$-algebra 
isomorphism. Suppose this is false. Then there exists a surjective $k$-algebra homomorphism 
$\alpha_1:\overline{R}\to k[t]/(t^{r+1})$ such that $\tau\circ\alpha_1=\alpha$ where 
$\tau:k[t]/(t^{r+1})\to k[t]/(t^{r})$ is the natural projection. Let $(\overline{U}_1,\phi_1)$ be a lift of $Y$ over 
$k[t]/(t^{r+1})$ corresponding to $\alpha_1$. Then $k[t]/(t^r)\otimes_{k[t]/(t^{r+1}),\tau}\overline{U}_1
\cong \overline{U}$ and $t^{r}\overline{U}_1\cong Y$. We have a short 
exact sequence of $k[t]/(t^{r+1})\,G$-modules
\begin{equation}
\label{eq:thesos}
0\to t^{r}\overline{U}_1\to \overline{U}_1\to \overline{U}\to 0.
\end{equation}
We now show that this sequence cannot split as a sequence of $kG$-modules. Suppose it splits. Then 
$\overline{U}_1\cong Y\oplus \overline{U}$ as $kG$-modules. Let 
$z=\left(\begin{array}{c}y\\u\end{array}\right)\in Y\oplus \overline{U} \cong \overline{U}_1$. 
Then $t$ acts on $z$ as multiplication by a matrix of the form
$$M_t=\left(\begin{array}{cc}0&\gamma\\0&\mu_t\end{array}\right)$$ 
where $\gamma:\overline{U}\to Y$ is a surjective $kG$-module homomorphism, and $\mu_t$ is 
multiplication by $t$ on $\overline{U}$. Since $t^{r}\overline{U}_1\cong Y$, 
$(M_t)^r$ cannot be the zero matrix.
Because $\mathrm{End}_{kG}(Y)\cong k$, $\gamma$ corresponds, up to a non-zero
scalar multiple,  to the isomorphism $\phi:\overline{U}/t\,\overline{U}\to Y$, which means that the kernel of 
$\gamma$ is $t\,\overline{U}$. 
This implies that $(M_t)^r$ is the zero matrix, 
which is a contradiction.
Hence the short exact sequence (\ref{eq:thesos}) does not split as a sequence of $kG$-modules. 
Since $\mathrm{Ext}^1_{kG}(\overline{U},Y)=0$ by assumption and $t^{r}\overline{U}_1\cong Y$, this 
is impossible. Therefore, $\overline{U}_1$ does not exist, which means that $\alpha$ is a 
$k$-algebra isomorphism. Thus $\overline{R}\cong k[t]/(t^{r})$, and the universal mod $p$ 
deformation of $Y$ over $\overline{R}$ is 
$[\overline{U},\phi]$.
\end{proof}


\section{Blocks with generalized quaternion defect groups}
\label{s:quaternionsylow}
\setcounter{equation}{0}
\setcounter{figure}{0}

For the remainder of this paper, we make the following assumptions.

\begin{hypo}
\label{hyp:alltheway}
Let $k$ be an algebraically closed field of characteristic $2$. Suppose $G$ is a finite group 
and $B$ is a block of $kG$ with a generalized quaternion defect group $D$ of order $2^{d+1}\ge 8$
such that there are precisely three isomorphism classes of simple $B$-modules.
Assume further that the center of $G/O_B$ has even order, where $O_B$ is the maximal
normal subgroup of $G$ of odd order which acts trivially on all $kG$-modules belonging to $B$.
Let $z\in G$ be an involution such that $zO_B$ lies in the center of $G/O_B$, and let
$N=\langle O_B,z\rangle$. Let $\overline{B}$ be a 
block of $k[G/N]$ which is contained in the image of $B$ under the natural surjection $\pi:kG\to k[G/N]$. 
\end{hypo}

Note that Olsson proved in \cite{olsson} that a block with generalized quaternion defect groups contains at 
most three simple modules up to isomorphism. Hence we consider the largest case.

\begin{rem}
\label{rem:principal}
Suppose $k$ is an algebraically closed field of characteristic $2$ and $G$ is a finite group with generalized 
quaternion Sylow $2$-subgroups of order $2^{d+1}\ge 8$. If $B$ is the principal block of $kG$, then
$O_B=O_{2'}(G)$, where $O_{2'}(G)$ is the maximal normal subgroup of $G$ of odd order.
By a result of Brauer and Suzuki \cite{brsu,br1}, the center of $G/O_{2'}(G)$ has order $2$. 
Hence all assumptions in Hypothesis
\ref{hyp:alltheway} are satisfied provided there are precisely three isomorphism classes of 
simple $B$-modules.
\end{rem}

The following remark discusses the basic properties of $B$ and $\overline{B}$ as in Hypothesis 
\ref{hyp:alltheway}.

\begin{rem}
\label{rem:trythis}
Assume Hypothesis $\ref{hyp:alltheway}$.
Since $O_B$ has odd order and $\mathrm{char}(k)=2$, the blocks of $k[G/O_B]$
correspond to the blocks of $kG$ whose primitive central
idempotents occur in the decomposition of the central idempotent $\frac{1}{\#O_B}
\sum_{g\in O_B}g$. Because $O_B$ acts trivially on all $kG$-modules belonging to $B$,
it follows that $B$ can be identified with a block $B_{O_B}$ of $k[G/O_B]$ and 
all $kG$-modules $V,V'$ belonging to $B$ can be identified with $k[G/O_B]$-modules belonging 
to $B_{O_B}$. We obtain that $\mathrm{Hom}_{kG}(V,V') = \mathrm{Hom}_{k[G/O_B]}(V,V')$,
$\underline{\mathrm{Hom}}_{kG}(V,V') =\underline{ \mathrm{Hom}}_{k[G/O_B]}(V,V')$
and $\mathrm{Ext}^i_{kG}(V,V')=\mathrm{Ext}^i_{k[G/O_B]}(V,V')$ for all $i\ge 1$.

Let $S$ and $S'$ be simple $kG$-modules belonging to $B$. Since their restriction to $N$ is trivial,
they can also be viewed as $k[G/N]$-modules. The Lyndon-Hochschild-Serre spectral sequence 
gives an exact sequence
$$0\to \mathrm{Ext}^1_{k[G/N]}(S,S')\to \mathrm{Ext}^1_{k[G/O_B]}(S,S') \to \mathrm{Hom}_{k[G/N]}
(S,S').$$
This implies immediately that $\mathrm{Ext}^1_{kG}(S,S')$ and $\mathrm{Ext}^1_{k[G/N]}(S,S')$ have 
the same $k$-dimension for each choice of non-isomorphic $S$ and $S'$. Since $\overline{B}$ is contained
in the image of $B$ under the natural surjection  $\pi:kG\to k[G/N]$, it follows by \cite[Prop. 13.3]{alp}
that $\overline{B}$ has the same number of isomorphism classes of simple modules as $B$.
Hence we can identify the simple $B$-modules with the simple $\overline{B}$-modules. 
It follows that the restriction of $\pi$ to $B$ gives a surjective $k$-algebra
homomorphism $\pi_B:B\to \overline{B}$. In particular, this implies that if $V$ is a $k[G/N]$-module
belonging to $\overline{B}$, then its inflation to $kG$ via $\pi$ belongs to $B$.

Define $D_{O_B}=DO_B/O_B$ and $\overline{D}=DN/N$. 
Then $D_{O_B}$ is a defect group of $B_{O_B}$. By \cite[Thm. 13.6]{alp}, $zO_B$ lies in $D_{O_B}$,
which implies $\overline{D}\cong D_{O_B}/\langle z O_B\rangle$.
Since the simple $B$-modules can be identified with the simple $\overline{B}$-modules, it follows 
by \cite[Prop. (56.32)]{CR} that the order of the defect groups of $\overline{B}$ is 
$2^d=\# \overline{D}$. Because all indecomposable $B$-modules are $(G,D)$-projective, it follows that all
indecomposable $\overline{B}$-modules are $(G/N,\overline{D})$-projective. Since there
exists an indecomposable $\overline{B}$-module which has a defect group of $\overline{B}$ as a vertex
(see \cite[Thm. (59.10)]{CR}), it follows that $\overline{D}$ is a defect group of $\overline{B}$.
In particular, this implies that the defect groups of $\overline{B}$ are dihedral groups of order $2^d$.
\end{rem}

\begin{lemma}
\label{lem:projectives}
Assume Hypothesis $\ref{hyp:alltheway}$, and
let $\pi_B:B\to\overline{B}$ be the surjective $k$-algebra homomorphism from Remark $\ref{rem:trythis}$. 
Suppose $\overline{P}$ is a projective indecomposable $\overline{B}$-module,
and denote its inflation to $B$ via $\pi_B$ also by $\overline{P}$. Let $P$ be a projective 
indecomposable $B$-module which is a projective cover of $\overline{P}$. Then 
there is a short exact sequence of $B$-modules
$$0\to \overline{P}\to P\to \overline{P}\to 0.$$
\end{lemma}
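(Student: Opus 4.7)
The plan is to work inside $B$, which by Remark~\ref{rem:trythis} we identify with the block $B_{O_B}$ of $k[G/O_B]$. Let $Z = \langle zO_B\rangle$, a central subgroup of order~$2$ in $G/O_B$, and set $s = 1 + zO_B \in kZ \subseteq Z(k[G/O_B])$. Because $\mathrm{char}(k)=2$ and $z$ is an involution, one has $s^{2}=0$, and the kernel of $\pi\colon k[G/O_B]\to k[G/N]$ is $s\cdot k[G/O_B]$. Using that the block idempotent $e$ of $B$ is central, a short computation identifies the kernel of $\pi_B$ with the two-sided ideal $I := sB = Bs \subseteq B$. The goal is then to establish $P/IP\cong\overline{P}$ and $IP\cong\overline{P}$ and splice these into $0\to IP \to P \to P/IP \to 0$.

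The identification $P/IP\cong\overline{P}$ is formal. Since $\overline{P}$ is annihilated by $I$ as a $B$-module, the projective cover map $P\twoheadrightarrow\overline{P}$ factors as $P\to P/IP\to\overline{P}$. Because $I^{2}=0$, the ideal $I$ lies in $\mathrm{rad}(B)$, so reduction modulo $I$ sends $B$-projective covers to $\overline{B}$-projective covers; thus $P/IP$ is the projective cover of $\overline{P}$ in $\overline{B}\text{-mod}$, and as $\overline{P}$ is already $\overline{B}$-projective the map $P/IP\to\overline{P}$ is an isomorphism.

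The crux is the identification $IP\cong\overline{P}$, and I expect the only real obstacle to be the structural observation behind it: the restriction of $P$ to $Z$ is a \emph{free} $kZ$-module. Granting this for a moment, $kZ=k[s]/(s^{2})$ being local forces $P|_Z$ to be a direct sum of copies of $kZ$, and in such a module multiplication by $s$ induces a $k$-linear isomorphism $P/sP\xrightarrow{\ \sim\ } sP$. This map is also $B$-linear because $s$ lies in $Z(B)$; combined with $sP=IP$ and $P/sP=P/IP\cong\overline{P}$ from the previous step, it yields $IP\cong\overline{P}$ as $B$-modules. The desired short exact sequence follows at once.

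Freeness of $P|_Z$ is the one non-formal input. I would derive it by observing that $P$ is a projective $k[G/O_B]$-module, so its restriction to the subgroup $Z$ is a projective $kZ$-module; since the local ring $kZ$ has a unique indecomposable projective module (namely $kZ$ itself), that restriction must be free.
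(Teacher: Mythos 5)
Your proof is correct and follows essentially the same route as the paper's. The paper reduces (via Remark~\ref{rem:trythis}) to the case $O_B = 1$ so $N = \langle z\rangle$, notes $\mathrm{Ker}(\pi_B) = (1+z)B$, writes down the short exact sequence $0 \to (1+z)P \to P \to \overline{P} \to 0$, and then asserts that $(1+z)x \mapsto 1\otimes x$ gives a $B$-module isomorphism $(1+z)P \to \overline{B}\otimes_B P$; the well-definedness and bijectivity of that map are left to the reader, and the hidden input is precisely your observation that $P$ restricted to $\langle z\rangle$ is free over $k\langle z\rangle$, so that $\ker\bigl((1+z)\cdot\bigr) = (1+z)P$. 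You have simply made that freeness argument, together with the identification $P/IP \cong \overline{P}$ via the projective-cover characterization, fully explicit rather than implicit. The only cosmetic difference is that you work inside $k[G/O_B]$ directly (keeping $s = 1 + zO_B$ and the block idempotent in play) instead of first invoking the reduction to $O_B = 1$, but the content is identical.
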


\begin{proof}
By Remark \ref{rem:trythis}, we can assume without loss
of generality that $O_B$ is trivial, which means that the involution $z$ lies in the center of $G$
and $N=\langle z \rangle$. Hence the kernel of the natural projection $\pi:kG\to k[G/N]$ 
is $\mathrm{Ker}(\pi)=(1+z)kG$, which implies that the kernel of $\pi_B$ is 
$\mathrm{Ker}(\pi_B)=(1+z)B$. Because $P$ is a projective $B$-module, we 
obtain a short exact sequence of $B$-modules
$$0\to (1+z)P \to P \to \overline{P}\to 0.$$
Since the map $f:(1+z)P\to \overline{B}\otimes_B P$, defined by $f((1+z)x)=1\otimes x$ for 
all $x\in P$, 
is a $B$-module isomorphism, Lemma \ref{lem:projectives} follows.
\end{proof}

Assume Hypothesis \ref{hyp:alltheway}.
From Erdmann's classification of all blocks of tame representation type 
in \cite{erd}, and by using Lemma \ref{lem:projectives}, it follows that the quivers of the basic algebras 
of $B$ and $\overline{B}$ can be identified and that, up to Morita equivalence, there
are precisely three families (I), (II) and (III) of blocks $\overline{B}$ and $B$.

Using \cite{erd}, we now give a  description of these families as follows.
For each of the three families, we give a quiver $Q$ and ideals $I$ and $\overline{I}$
of $kQ$ such that
$B$ is Morita equivalent to $\Lambda=kQ/I$ and $\overline{B}$ is Morita 
equivalent to $\overline{\Lambda}=kQ/\overline{I}$.
We denote the simple $\Lambda$-modules by $S_0,S_1,S_2$, or, using short-hand, by $0,1,2$.
The corresponding projective indecomposable $\Lambda$-modules are denoted by
$P_0$, $P_1$ and $P_2$. We describe the radical series of $P_0$, $P_1$ and $P_2$
and also provide the decomposition matrix of $B$. 
All figures appear at the end of the paper.

\subsection{Family (I)}
\label{ss:psl1A}
For Family (I), $d\ge 2$. The quiver $Q$ has the form
$$
\xymatrix @R=-.2pc {
&0&\\
Q= \quad 1\; \bullet \ar@<.8ex>[r]^(.66){\beta} \ar@<1ex>[r];[]^(.34){\gamma}
& \bullet \ar@<.8ex>[r]^(.44){\delta} \ar@<1ex>[r];[]^(.56){\eta} & \bullet\; 2}
$$
and the ideals $I$ and $\overline{I}$  are given as
\begin{eqnarray*}
I&=&\langle \beta\gamma\beta-\eta\delta\beta
(\gamma\eta\delta\beta)^{2^{d-1}-1},\gamma\beta\gamma-\gamma\eta\delta (\beta\gamma\eta
\delta)^{2^{d-1}-1}, \\ 
&&\eta\delta\eta-\beta\gamma\eta(\delta\beta\gamma\eta)^{2^{d-1}-1},
\delta\eta\delta-\delta\beta\gamma(\eta\delta\beta\gamma)^{2^{d-1}-1},\\
&&\delta\beta\gamma\beta,\gamma\eta\delta\eta\rangle,\\
\overline{I}&=&\langle \gamma\beta,\delta\eta,(\eta\delta\beta\gamma)^{2^{d-2}}-(\beta\gamma\eta\delta
)^{2^{d-2}}\rangle.
\end{eqnarray*}
The radical series of the projective indecomposable $\Lambda$-modules 
$P_0$, $P_1$ and $P_2$ are described in  Figure \ref{fig:psl1B} 
where the radical series length of each of these modules is $2^{d+1}+1$. 
The decomposition matrix of $B$ is given in Figure \ref{fig:decompsl1}.

\subsection{Family (II)}
\label{ss:psl2A}
For Family (II), $d\ge 2$. The quiver $Q$ has the form
$$Q=\vcenter{\xymatrix  {
 0\,\bullet \ar@<.7ex>[rr]^{\beta} \ar@<.8ex>[rr];[]^{\gamma}\ar@<.7ex>[rdd]^{\kappa} \ar@<.8ex>[rdd];[]^{\lambda}
&&\bullet\ar@<.7ex>[ldd]^{\delta} \ar@<.8ex>[ldd];[]^{\eta}\,1\\&&\\ &
\unter{\mbox{\normalsize $\bullet$}}{\mbox{\normalsize $2$}}& }}$$
and the ideals $I$ and $\overline{I}$ are given as
\begin{eqnarray*}
I&=&\langle \delta\beta-\kappa\lambda\kappa,
\gamma\eta-\lambda\kappa\lambda,
\lambda\delta-\gamma\beta\gamma,
\eta\kappa-\beta\gamma\beta,\\
&&\beta\lambda-\eta(\delta\eta)^{2^{d-1}-1},\kappa\gamma-\delta(\eta\delta)^{2^{d-1}-1},\\
&&\delta\beta\gamma, \gamma\eta\delta, \eta\kappa\lambda
\rangle,\\
\overline{I}&=&\langle \delta\beta,\lambda\delta,\beta\lambda,\kappa\gamma,\eta\kappa,\gamma\eta,
\gamma\beta-\lambda\kappa,\kappa\lambda-(\delta\eta)^{2^{d-2}},(\eta\delta)^{2^{d-2}}-\beta\gamma
\rangle.
\end{eqnarray*}
The radical series of the projective indecomposable $\Lambda$-modules $P_0$, $P_1$ and $P_2$
are described in  Figure \ref{fig:psl2B}
where the radical series length of $P_0$ is $5$ and the radical series length of $P_1$ and $P_2$
is $2^{d}+1$. 
The decomposition matrix of $B$ is given in Figure \ref{fig:decompsl2}.

\subsection{Family (III)}
\label{ss:a7A}
For Family (III), $d\ge 3$. The quiver $Q$ has the form
$$\xymatrix @R=-.2pc {
&1&0&\\
 Q= \quad&\ar@(ul,dl)_{\alpha} \bullet \ar@<.8ex>[r]^{\beta} \ar@<.9ex>[r];[]^{\gamma}
& \bullet \ar@<.8ex>[r]^(.46){\delta} \ar@<.9ex>[r];[]^(.54){\eta} & \bullet\;2}$$
and the ideals $I$ and $\overline{I}$ are given as
\begin{eqnarray*}
I&=&\langle \beta\alpha-\eta\delta\beta(\gamma\eta\delta\beta),
\alpha\gamma-\gamma\eta\delta(\beta\gamma\eta\delta), \\
&& \delta\eta\delta-\delta\beta\gamma(\eta\delta\beta\gamma),
\eta\delta\eta-\beta\gamma\eta(\delta\beta\gamma\eta),\\
&& \gamma\beta-\alpha^{2^{d-1}-1},\beta\alpha^2,\delta\eta\delta\beta\rangle,\\
\overline{I}&=&\langle \beta\alpha,\alpha\gamma, \gamma\beta,\delta\eta,\eta\delta\beta\gamma-
\beta\gamma\eta\delta,\alpha^{2^{d-2}}-\gamma\eta\delta\beta\rangle.
\end{eqnarray*}
The radical series of the projective indecomposable $\Lambda$-modules $P_0$, $P_1$ and $P_2$ 
are described in  Figure \ref{fig:a7B} 
where the radical series length of $P_0$ and $P_2$ is $9$ and the radical series length of $P_1$
is $9$ if $d=3$ and $2^{d-1}+1$ if $d\ge 4$.
The decomposition matrix of $B$ is given in Figure \ref{fig:decoma7}.

\begin{rem}
\label{rem:oherdmann}
It follows from \cite{erd} that we have the following Morita equivalences for the blocks
$\overline{B}$ and $B$: 
\begin{enumerate}
\item[(i)] In family (I), $\overline{B}$ is Morita equivalent to the principal $2$-modular block of 
$\mathrm{PSL}_2(\mathbb{F}_q)$ and $B$ is Morita equivalent to the principal $2$-modular 
block of $\mathrm{SL}_2(\mathbb{F}_q)$ when $q$ is a prime power with $q\equiv 1 \mod 4$ such that
$2^{d+1}$ is the maximal $2$-power dividing $(q^2-1)$.
\item[(ii)] In family (II), $\overline{B}$ is Morita equivalent to the principal $2$-modular block of 
$\mathrm{PSL}_2(\mathbb{F}_q)$ and $B$ is Morita equivalent to the principal $2$-modular 
block of $\mathrm{SL}_2(\mathbb{F}_q)$ when $q$ is a prime power with $q\equiv 3 \mod 4$ such that
$2^{d+1}$ is the maximal $2$-power dividing $(q^2-1)$.
\item[(iii)] In family (III), if $d=3$ then $\overline{B}$ is Morita equivalent to the principal $2$-modular 
block of the alternating group $A_7$ and $B$ is Morita equivalent to the principal $2$-modular block 
of the non-trivial double cover $\tilde{A}_7$ of  $A_7$. If $d\ge 4$, by \cite[\S X.4]{erd}, it remains
open whether there are blocks $\overline{B}$ and $B$ that are Morita equivalent to the algebras
$\overline{\Lambda}$ and $\Lambda$, respectively.
\end{enumerate}
If $\overline{B}$ is the principal block of $k[G/N]$, then we can exclude the blocks in family (III) 
with $d\ge 4$ as follows. Since $N$ contains $O_{2'}(G)$ by Remark \ref{rem:principal}, $G/N$ 
has no non-trivial normal subgroup of odd order. 
By \cite{gowa}, it then follows that $G/N$ is isomorphic to either a subgroup of 
$\mathrm{P\Gamma L}_2(\mathbb{F}_q)$ containing $\mathrm{PSL}_2(\mathbb{F}_q)$ for some odd 
prime power $q$, or to the alternating group $A_7$. Using a theorem by Clifford 
\cite[Hauptsatz V.17.3]{hup}, we see that the only possibility for $\overline{B}$ to be Morita 
equivalent to a block in family (III) occurs when $d=3$ and $\overline{B}$ is Morita 
equivalent to the principal $2$-modular block of the alternating group $A_7$.
Therefore, if $d\ge 3$ then the blocks $\overline{B}$ in the families  (I), (II) and (III) that are Morita
equivalent to principal blocks are precisely the blocks considered in \cite{3sim}.
\end{rem}

In the following remark we introduce the notation that will enable us to go back and forth between
$B,\overline{B}$ and $\Lambda,\overline{\Lambda}$.

\begin{rem}
\label{rem:oyvey}
Assume Hypothesis $\ref{hyp:alltheway}$. By Remark \ref{rem:trythis}, the natural
projection $\pi:kG\to k[G/N]$ induces by restriction to $B$
a surjective $k$-algebra homomorphism $\pi_B: B\to \overline{B}$. Let $e$ be a
sum of orthogonal primitive idempotents in $B$ such that $eBe$ is basic and Morita equivalent to $B$, 
and let $\overline{e}=\pi_B(e)$. Then $\overline{e}\overline{B}\overline{e}$ is basic and Morita
equivalent to $\overline{B}$, and 
the restriction of $\pi_B$ to $eBe$ gives a surjective $k$-algebra homomorphism $\pi_e:eBe\to 
\overline{e}\overline{B}\overline{e}$. 

Suppose $\Lambda=kQ/I$ and $\overline{\Lambda}=kQ/\overline{I}$ are
as described in 
\S\ref{ss:psl1A}, \S\ref{ss:psl2A} or \S\ref{ss:a7A}
such that $B$ is Morita equivalent to
$\Lambda$ and $\overline{B}$ is Morita equivalent to $\overline{\Lambda}$.
Then there are $k$-algebra isomorphisms $f_{\Lambda}:eBe\to \Lambda$ and 
$f_{\overline{\Lambda}}: \overline{e}\overline{B}\overline{e}\to \overline{\Lambda}$. Moreover,
$\pi_e$ induces a surjective $k$-algebra homomorphism 
$\pi_\Lambda:\Lambda\to \overline{\Lambda}$ such that the 
following diagram commutes:
\begin{equation}
\label{eq:diagram}
\xymatrix {
B\ar[rr]^{\pi_B}&&\overline{B}\\
eBe\ar[u]^{\mathrm{incl.}}\ar[rr]^{\pi_e}&& \overline{e}\overline{B}\overline{e}\ar[u]_{\mathrm{incl.}}\\
\Lambda\ar[u]^{f_\Lambda}\ar[rr]^{\pi_\Lambda}&&\overline{\Lambda}\ar[u]_{f_{\overline{\Lambda}}}
}
\end{equation}
By Remark \ref{rem:trythis}, we can identify the simple $\overline{B}$-modules with the simple
$B$-modules via inflation using $\pi_B$. This implies that the simple 
$\overline{e}\overline{B}\overline{e}$-modules can also be identified with the simple $eBe$-modules
via inflation using $\pi_e$, and hence the simple $\overline{\Lambda}$-modules can be 
identified with the simple $\Lambda$-modules via inflation using $\pi_\Lambda$.
For $i\in\{0,1,2\}$, let $e_i\in \Lambda$ (resp. $\overline{e}_i\in\overline{\Lambda}$)
be the primitive idempotent corresponding to the vertex $i$ in $Q$.
Using Lemma \ref{lem:projectives}
and the structure of the projective indecomposable $\Lambda$-modules, 
we see that we can replace $f_{\overline{\Lambda}}$ by
$f_{\overline{\Lambda}}\circ\Xi$ 
for a suitable $k$-algebra automorphism $\Xi$ of $\overline{\Lambda}$ induced by a quiver 
automorphism of $Q$ so as
to be able to assume that
$\overline{\Lambda}\otimes_{\Lambda,\pi_\Lambda} \Lambda e_i \cong
\overline{\Lambda}\, \pi_\Lambda(e_i)\cong  \overline{\Lambda}\overline{e}_i$ for $i\in\{0,1,2\}$. 
We make this assumption from now on. Hence we obtain for $i\in\{0,1,2\}$ that
the simple $\Lambda$-module $S_i=\Lambda e_i/\mathrm{rad}(\Lambda) e_i$ is isomorphic to the
inflation via $\pi_\Lambda$ of the simple $\overline{\Lambda}$-module 
$\overline{\Lambda} \overline{e}_i/\mathrm{rad}(\overline{\Lambda})\overline{e}_i$.
\end{rem}


\section{Stable endomorphism rings}
\label{s:stablend}
\setcounter{equation}{0}
\setcounter{figure}{0}

Assume Hypothesis \ref{hyp:alltheway}.
Let $V$ be a finitely generated $k[G/N]$-module and
denote its inflation to $kG$ via $\pi$ also by $V$. By Higman's criterion 
(see \cite[Thm. 1]{higman}), the $kG$-module endomorphisms of $V$ that
factor through projective $kG$-modules are precisely those in the image of the trace map
$\mathrm{Tr}_1^G:\mathrm{End}_k(V)\to \mathrm{End}_{kG}(V)$, where 
$\mathrm{Tr}_1^G(\psi)(v)=\sum_{g\in G}g\,\psi(g^{-1}v)$ for all $\psi\in\mathrm{End}_k(V)$
and all $v\in V$. Because $N$ acts trivially on $V$, $\mathrm{Tr}_1^N$ is multiplication by 
$\#N = 2\cdot (\# O_B)$. Hence $\mathrm{Tr}_1^N$ is zero, which implies that 
$\mathrm{Tr}_1^G=\mathrm{Tr}_N^G\circ \mathrm{Tr}_1^N$ is also zero. 
We obtain the following result.

\begin{prop}
\label{prop:stablend}
Assume Hypothesis $\ref{hyp:alltheway}$.
Suppose $V$ is an indecomposable $k[G/N]$-module belonging to $\overline{B}$,
and denote its inflation to $kG$ via $\pi$ also by $V$. Then $V$ belongs to $B$, and
$\underline{\mathrm{End}}_{kG}(V)\cong \mathrm{End}_{k[G/N]}(V)$.
In particular, $\underline{\mathrm{End}}_{kG}(V)\cong k$ if and only if          
$\mathrm{End}_{k[G/N]}(V)\cong k$. 
\end{prop}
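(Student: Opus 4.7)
The proof plan is essentially to combine the identification of $V$ as a $B$-module from Remark \ref{rem:trythis} with a Higman-style argument on the trace map, which is forced to vanish because $N$ has even order in characteristic $2$.

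First I would observe that $V$ belongs to $B$. This is immediate from Remark \ref{rem:trythis}: the natural surjection $\pi:kG\to k[G/N]$ restricts to a surjective $k$-algebra homomorphism $\pi_B:B\to\overline{B}$, so inflating a $\overline{B}$-module back through $\pi$ produces a $B$-module.

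Next I would identify the ordinary endomorphism rings. Since $N$ acts trivially on $V$, a $k$-linear endomorphism of $V$ commutes with the $G$-action if and only if it commutes with the $G/N$-action, so $\mathrm{End}_{kG}(V)=\mathrm{End}_{k[G/N]}(V)$ as $k$-algebras.

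The main step is to show that every $kG$-endomorphism of $V$ that factors through a projective $kG$-module is zero; then $\underline{\mathrm{End}}_{kG}(V)=\mathrm{End}_{kG}(V)=\mathrm{End}_{k[G/N]}(V)$. By Higman's criterion (\cite[Thm. 1]{higman}), the projective-factoring endomorphisms are exactly the image of the relative trace map
$$
\mathrm{Tr}_1^G:\mathrm{End}_k(V)\to\mathrm{End}_{kG}(V),\qquad \mathrm{Tr}_1^G(\psi)(v)=\sum_{g\in G}g\,\psi(g^{-1}v).
$$
I would factor $\mathrm{Tr}_1^G=\mathrm{Tr}_N^G\circ\mathrm{Tr}_1^N$ by transitivity of trace. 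Since $N$ acts trivially on $V$, for every $\psi\in\mathrm{End}_k(V)$ we have $\mathrm{Tr}_1^N(\psi)=\#N\cdot\psi=2\,(\#O_B)\cdot\psi$, which is zero because $\mathrm{char}(k)=2$. Hence $\mathrm{Tr}_1^G=0$, so no non-zero endomorphism of $V$ factors through a projective $kG$-module, giving the claimed isomorphism. The ``in particular'' clause is then an immediate consequence. I do not expect any obstacle here; the whole argument is a short unpacking of the definitions once Higman's criterion is invoked, and it is independent of the internal structure of the families (I), (II), (III) described in \S\ref{s:quaternionsylow}.
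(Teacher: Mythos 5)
Your argument is correct and is precisely the one the paper gives: the paragraph preceding the proposition invokes Higman's criterion, factors $\mathrm{Tr}_1^G=\mathrm{Tr}_N^G\circ\mathrm{Tr}_1^N$, and observes that $\mathrm{Tr}_1^N$ is multiplication by $\#N=2(\#O_B)=0$ in characteristic $2$, while the fact that $V$ belongs to $B$ is drawn from Remark \ref{rem:trythis} exactly as you do. The only thing you add is the (implicit in the paper) remark that $\mathrm{End}_{kG}(V)=\mathrm{End}_{k[G/N]}(V)$ because $N$ acts trivially, which is a harmless unpacking.
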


We now describe all $k[G/N]$-modules belonging to $\overline{B}$ whose endomorphism rings
are isomorphic to $k$ by describing the corresponding $\overline{\Lambda}$-modules and their
inflations via $\pi_\Lambda$ to $\Lambda$-modules, using the notation introduced in Remark \ref{rem:oyvey}.
We first need to define some indecomposable $\Lambda$-modules.

\begin{dfn}
\label{def:dirstrings}
Let $\Lambda=kQ/I$ where $Q$ and $I$ are either as in \S\ref{ss:psl1A},   \S\ref{ss:psl2A} or \S\ref{ss:a7A}. 
\begin{enumerate}
\item[(i)] Let $w=\zeta_n\cdots\zeta_2\zeta_1$ be a path of length $n\ge 1$ in $kQ$ 
whose image modulo $I$ does not lie in $\mathrm{soc}_2(\Lambda)$. 
For $1\le j\le n$, let $v_j$ be the end vertex of $\zeta_j$, and let $v_0$  be the starting
vertex of $\zeta_1$. Define a $kQ$-module $M_w$ of $k$-dimension $n+1$ with respect 
to a given $k$-basis $\{b_0,\ldots,b_n\}$ as follows. Let $0\le j\le n$.
If $v$ is a vertex in $Q$, define $v b_j=b_j$ if $v=v_j$, and $v b_j=0$ otherwise. If
$\zeta$ is an arrow in $Q$, define $\zeta b_j=b_{j+1}$ if $\zeta=\zeta_{j+1}$ and $j\le n-1$, 
otherwise define $\zeta b_j=0$. By our assumption on $w$, the ideal $I$ of $kQ$ acts as zero on $M_w$.
Hence $M_w$ defines a $\Lambda$-module, which we also denote by $M_w$.
Moreover, $M_w$ is a uniserial $\Lambda$-module whose descending
composition factors are $(S_{v_0},S_{v_1},\ldots,S_{v_n})$.
\item[(ii)] Let $\zeta_1,\zeta_2$ be two arrows in $Q$ which start (resp. end) at the same vertex $v_1$.
Let $v_0, v_2$ be the end vertices (resp. starting vertices) of $\zeta_1,\zeta_2$. 
Let $w=\zeta_2\zeta_1^{-1}$ (resp. $w=\zeta_2^{-1}\zeta_1$).
Define a $kQ$-module $M_w$ of $k$-dimension $3$ with respect to a given $k$-basis $\{b_0,b_1,b_2\}$
as follows. If $v$ is a vertex in $Q$ and $j\in\{0,1,2\}$, define $v b_j=b_j$ if $v=v_j$, and $v b_j=0$ otherwise. 
Define $\zeta_1 b_1 = b_0$ and $\zeta_2 b_1=b_2$ (resp. $\zeta_1 b_0=b_1$ and $\zeta_2 b_2=b_1$).
If $\zeta$ is an arrow in $Q$ and $j\in\{0,1,2\}$ such that
$(\zeta,j)\not\in\{(\zeta_1,1),(\zeta_2,1)\}$ (resp. $(\zeta,j)\not\in\{(\zeta_1,0),(\zeta_2,2)\}$),
define $\zeta b_j=0$. Since the ideal $I$ of $kQ$ acts as zero on $M_w$, this defines a 
$\Lambda$-module, which we also denote by $M_w$. Moreover, $M_w$ is a $\Lambda$-module with
$M_w/\mathrm{rad}(M_w)\cong S_{v_1}$ (resp. $\mathrm{soc}(M_w)\cong S_{v_1}$) and
$\mathrm{rad}(M_w)\cong S_{v_0}\oplus S_{v_2}$ (resp. $M/\mathrm{soc}(M_w)\cong S_{v_0}\oplus S_{v_2}$).
In particular, $\mathrm{soc}(M_w) = \mathrm{rad}(M_w)$.
\end{enumerate}
\end{dfn}

\begin{rem}
\label{rem:stablend}
Assume Hypothesis \ref{hyp:alltheway}. 
Suppose $\Lambda=kQ/I$ and $\overline{\Lambda}=kQ/\overline{I}$ are
as described in \S\ref{ss:psl1A}, \S\ref{ss:psl2A} or \S\ref{ss:a7A}
such that $B$ is Morita equivalent to $\Lambda$ and $\overline{B}$ is Morita 
equivalent to $\overline{\Lambda}$. Let $\pi_\Lambda:\Lambda\to\overline{\Lambda}$ 
be the surjective $k$-algebra
homomorphism from Remark \ref{rem:oyvey}. In particular, for $i\in\{0,1,2\}$,
the simple $\Lambda$-module $S_i$ is isomorphic to the
inflation via $\pi_\Lambda$ of the simple $\overline{\Lambda}$-module 
corresponding to the vertex $i$ in $Q$, and we denote the latter also by $S_i$.

We now describe the $\overline{\Lambda}$-modules whose endomorphism rings are isomorphic to $k$ 
and their inflations via $\pi_\Lambda$ to $\Lambda$-modules, up to isomorphism, using the following
notation:

A module of the form $\begin{array}{c}v_0\\[-.5ex] \vdots\\ v_n\end{array}$
denotes a uniserial $\overline{\Lambda}$-module whose factors in the descending 
radical series are  $(S_{v_0},\ldots,S_{v_n})$ and which is the unique such 
$\overline{\Lambda}$-module up to isomorphism.
A module of the form $\begin{array}{cc}\multicolumn{2}{c}{v_1}\\v_2&v_0\end{array}$ (resp. 
$\begin{array}{cc}v_2&v_0\\\multicolumn{2}{c}{v_1}\end{array}$) denotes an indecomposable
$\overline{\Lambda}$-module whose top (resp. socle) is simple and whose
factors in the descending radical series are
$(S_{v_1},S_{v_0}\oplus S_{v_2})$ (resp. $(S_{v_0}\oplus S_{v_2},S_{v_1})$)
and which is the unique such $\overline{\Lambda}$-module up to isomorphism.

\begin{enumerate}
\item[(i)]
If $\Lambda$ and $\overline{\Lambda}$ are as in \S$\ref{ss:psl1A}$, then a complete list of the 
$\overline{\Lambda}$-modules whose endomorphism ring is isomorphic to $k$, up to isomorphism, 
is given by
$$S_0,S_1, S_2,  \begin{array}{c}1\\0\end{array}, \begin{array}{c}0\\1\end{array},
\begin{array}{c}0\\2\end{array},\begin{array}{c}2\\0\end{array},
\begin{array}{c}1\\0\\2\end{array},\begin{array}{c}2\\0\\1\end{array},
\begin{array}{c}0\\1\\0\\2\end{array}, \begin{array}{c}0\\2\\0\\1\end{array},
\begin{array}{c}1\\0\\2\\0\end{array},\begin{array}{c}2\\0\\1\\0\end{array},
\begin{array}{cc}\multicolumn{2}{c}{0}\\1&2\end{array}, 
\begin{array}{cc}1&2\\\multicolumn{2}{c}{0}\end{array}.$$
Since $\mathrm{Ext}^1_\Lambda(S_0,S_i)\cong k \cong\mathrm{Ext}^1_\Lambda(S_i,S_0)$ for
$i\in\{1,2\}$, the inflations via $\pi_\Lambda$ of the two-dimensional $\overline{\Lambda}$-modules in this list 
are isomorphic to $M_\beta, M_\gamma, M_\delta, M_\eta$ (as defined in Definition \ref{def:dirstrings}).
Because $\mathrm{Ext}^1_\Lambda(M_\beta,S_2)\cong k\cong \mathrm{Ext}^1_\Lambda(M_\eta,S_1)$
and $\mathrm{Ext}^1_\Lambda(M_\gamma,S_2)\cong k\cong \mathrm{Ext}^1_\Lambda(S_1,M_\eta)$,
the inflations via $\pi_\Lambda$ of the three-dimensional $\overline{\Lambda}$-modules in this list 
are isomorphic to $M_{\delta\beta},M_{\gamma\eta}, M_{\gamma\delta^{-1}},M_{\beta^{-1}\eta}$. 
Since $\mathrm{Ext}^1_\Lambda(S_0,M_{\delta\beta})\cong 
k\cong\mathrm{Ext}^1_\Lambda(S_0,M_{\gamma\eta})$ and 
$\mathrm{Ext}^1_\Lambda(M_{\delta\beta},S_0)\cong k\cong 
\mathrm{Ext}^1_\Lambda(M_{\gamma\eta},S_0)$, the inflations via $\pi_\Lambda$ of the four-dimensional 
$\overline{\Lambda}$-modules in this list  are isomorphic to
$M_{\delta\beta\gamma}, M_{\gamma\eta\delta},M_{\eta\delta\beta},M_{\beta\gamma\eta}$.

Therefore, a complete list of the $\Lambda$-modules which are inflated via $\pi_\Lambda$
from the above $\overline{\Lambda}$-modules, up to isomorphism, is given by
$$S_0,S_1, S_2, M_\beta, M_\gamma, M_\delta, M_\eta, M_{\delta\beta},
M_{\gamma\eta}, M_{\delta\beta\gamma}, M_{\gamma\eta\delta},M_{\eta\delta\beta},
M_{\beta\gamma\eta}, M_{\gamma\delta^{-1}},M_{\beta^{-1}\eta}.$$

\item[(ii)] If $\Lambda$ and $\overline{\Lambda}$ are as in \S$\ref{ss:psl2A}$, then a complete list of the 
$\overline{\Lambda}$-modules whose endomorphism ring is isomorphic to $k$, up to isomorphism, is given by
$$S_0,S_1, S_2,  \begin{array}{c}0\\1\end{array}, \begin{array}{c}1\\0\end{array}, 
\begin{array}{c}1\\2\end{array},\begin{array}{c}2\\1\end{array},
\begin{array}{c}0\\2\end{array},\begin{array}{c}2\\0\end{array},
\begin{array}{cc}\multicolumn{2}{c}{0}\\1&2\end{array}, 
\begin{array}{cc}\multicolumn{2}{c}{1}\\0&2\end{array}, 
\begin{array}{cc}\multicolumn{2}{c}{2}\\0&1\end{array}, 
\begin{array}{cc}1&2\\\multicolumn{2}{c}{0}\end{array},
\begin{array}{cc}0&2\\\multicolumn{2}{c}{1}\end{array},
\begin{array}{cc}0&1\\\multicolumn{2}{c}{2}\end{array}.$$
Using similar $\mathrm{Ext}$ calculations as in (i), we see that
a complete list of the $\Lambda$-modules which are inflated via $\pi_\Lambda$
from these $\overline{\Lambda}$-modules, up to isomorphism, is given by
$$S_0,S_1, S_2, M_\beta, M_\gamma, M_\delta, M_\eta, M_\kappa, M_\lambda,
M_{\beta\kappa^{-1}}, M_{\gamma\delta^{-1}}, M_{\lambda\eta^{-1}},
M_{\gamma^{-1}\lambda}, M_{\beta^{-1}\eta}, M_{\kappa^{-1}\delta}.$$

\item[(iii)] 
If $\Lambda$ and $\overline{\Lambda}$ are as in \S$\ref{ss:a7A}$, a complete list of the 
$\overline{\Lambda}$-modules whose endomorphism ring is isomorphic to $k$, up to isomorphism, is 
given by
$$S_0,S_1, S_2,  \begin{array}{c}1\\0\end{array}, \begin{array}{c}0\\1\end{array},
\begin{array}{c}0\\2\end{array},\begin{array}{c}2\\0\end{array},
\begin{array}{c}1\\0\\2\end{array},\begin{array}{c}2\\0\\1\end{array},
\begin{array}{c}0\\1\\0\\2\end{array}, \begin{array}{c}0\\2\\0\\1\end{array},
\begin{array}{c}1\\0\\2\\0\end{array},\begin{array}{c}2\\0\\1\\0\end{array},
\begin{array}{cc}\multicolumn{2}{c}{0}\\1&2\end{array}, 
\begin{array}{cc}1&2\\\multicolumn{2}{c}{0}\end{array}.$$
Using similar $\mathrm{Ext}$ calculations as in (i), we see that
a complete list of the $\Lambda$-modules which are inflated via $\pi_\Lambda$
from these $\overline{\Lambda}$-modules, up to isomorphism, is given by
$$S_0,S_1, S_2, M_\beta, M_\gamma, M_\delta, M_\eta, M_{\delta\beta},
M_{\gamma\eta}, M_{\delta\beta\gamma}, M_{\gamma\eta\delta},M_{\eta\delta\beta},
M_{\beta\gamma\eta}, M_{\gamma\delta^{-1}},M_{\beta^{-1}\eta}.$$
\end{enumerate}
\end{rem}


\section{Ordinary characters for blocks with generalized quaternion defect groups}
\label{s:ordinaryquaternion}
\setcounter{equation}{0}
\setcounter{figure}{0}

Assume Hypothesis \ref{hyp:alltheway}.
In the notation of \cite[\S2]{olsson}, this means that we 
are in Case $(aa)$ (see \cite[Thm. 3.17]{olsson}).
Let $W$ be the ring of infinite Witt vectors over $k$, and
let $F$ be the fraction field of $W$.
For $2\le\ell\le d$, let $\zeta_{2^\ell}$ be a fixed primitive $2^\ell$-th root of unity 
in an algebraic closure of $F$. Let
$$\chi_1,\chi_2,\chi_3,\chi_4,\chi_5,\chi_6,\qquad \chi_{7,i}, 1\le i\le 2^{d-1}-1,$$
be the ordinary irreducible characters of $G$ belonging to $B$. Let $\sigma$ be an element of order 
$2^d$ in $D$. By \cite{olsson}, there is a block $b_\sigma$ of $kC_G(\sigma)$ with 
$b_\sigma^G=B$ which contains a unique $2$-modular character $\varphi^{(\sigma)}$ such that 
the following is true. There is an ordering of $(1,2,\ldots,2^{d-1}-1)$ such that for 
$1\le i\le 2^{d-1}-1$ and $r$ odd,
\begin{equation}
\label{eq:great1}
\chi_{7,i}(\sigma^r)=(\zeta_{2^{d}}^{ri}+\zeta_{2^{d}}^{-ri})\cdot \varphi^{(\sigma)}(1).
\end{equation}

Note that $W$ contains all roots of unity of order not divisible by $2$. Hence by \cite{olsson} and by 
\cite{fong}, the characters 
$\chi_1,\chi_2,\chi_3,\chi_4,\chi_5,\chi_6$
correspond to simple $FG$-modules. On the other hand, the characters 
$\chi_{7,i}$, $i=1,\ldots,2^{d-1}-1$,
fall into $d-1$ Galois orbits $\mathcal{O}_2,\ldots, \mathcal{O}_d$ under the action of 
$\mathrm{Gal}(F(\zeta_{2^d}+\zeta_{2^d}^{-1})/F)$. Namely for $2\le\ell\le d$, 
$\mathcal{O}_{\ell}=\{ \chi_{7,2^{d-\ell}(2u-1)} \;|\; 1\le u\le 2^{\ell-2}\}$. The field generated by the 
character values of each $\xi_\ell\in\mathcal{O}_\ell$ over $F$ is $F(\zeta_{2^\ell}+\zeta_{2^\ell}^{-1})$. 
Hence by \cite{fong}, each $\xi_\ell$ corresponds to an absolutely irreducible 
$F(\zeta_{2^\ell}+\zeta_{2^\ell}^{-1})G$-module $X_\ell$. 
By \cite[Satz V.14.9]{hup},  this implies that for $2\le\ell\le d$, the Schur index of each 
$\xi_\ell\in\mathcal{O}_\ell$ over $F$ is $1$. Hence we obtain $d-1$ non-isomorphic simple 
$FG$-modules $V_2,\ldots,V_{d}$
with characters $\rho_2,\ldots, \rho_{d}$ satisfying
\begin{equation}
\label{eq:goodchar1}
\rho_\ell =\sum_{\xi_\ell\in\mathcal{O}_\ell}\xi_\ell = 
\sum_{u=1}^{2^{\ell-2}} \chi_{7,2^{d-\ell}(2u-1)} 
\qquad\mbox{for $2\le \ell \le d$.}
\end{equation}
By \cite[Hilfssatz V.14.7]{hup}, $\mathrm{End}_{FG}(V_\ell)$ is a commutative $F$-algebra isomorphic 
to the field generated over $F$ by the character values of any $\xi_\ell\in\mathcal{O}_\ell$. 
This means
\begin{equation}
\label{eq:goodendos}
\mathrm{End}_{FG}(V_\ell)\cong F(\zeta_{2^\ell}+\zeta_{2^\ell}^{-1})\qquad\mbox{for $2\le \ell \le d$.}
\end{equation}

By \cite{olsson}, the characters $\chi_{7,i}$ have the same degree $x$ for $1\le i\le 2^{d-1}-1$. The characters $\chi_1,\chi_2,\chi_3,\chi_4$ have height $0$, $\chi_5, \chi_6$ have height $d-1$,
and $\chi_{7,i}$, $1\le i\le 2^{d-1}-1$, have height $1$. 
Hence $x=2^{a-(d+1)+1}x^*$ where $\#G=2^a\cdot g^*$ and $x^*$ and $g^*$ are odd. Since the 
centralizer $C_G(\sigma)$ contains $\langle \sigma \rangle$, we have $\# C_G(\sigma)=
2^d\cdot 2^b\cdot m^*$ where $b\ge 0$ and $m^*$ is odd. Suppose $\varphi^{(\sigma)}(1)=
2^c\cdot n^*$ where $c\ge 0$ and $n^*$ is odd. Note that if $\psi$ is an ordinary irreducible 
character of $C_G(\sigma)$ belonging to the block $b_\sigma$, then by \cite[p. 61]{serre}, $\psi(1)$ 
divides $(\# C_G(\sigma))/(\#\langle\sigma\rangle)=2^b\cdot m^*$. Because $\psi(1)=
s_\psi\cdot \varphi^{(\sigma)}(1)$ for some positive integer $s_\psi$, we have $c\le b$. 

Let $C$ be the conjugacy class in $G$ of $\sigma$, and let $t(C)\in WG$ be the class sum of $C$. We 
want to determine the action of $t(C)$ on $V_\ell$ for $2\le \ell\le d$. For this, we identify 
$\mathrm{End}_{FG}(V_\ell)\cong F(\zeta_{2^\ell}+\zeta_{2^\ell}^{-1})$ with 
$\mathrm{End}_{F(\zeta_{2^\ell}+\zeta_{2^\ell}^{-1})G}(X_\ell)$ for one particular absolutely irreducible 
$F(\zeta_{2^\ell}+\zeta_{2^\ell}^{-1})G$-constituent $X_\ell$ of $V_\ell$ with character $\xi_\ell$. 
By (\ref{eq:goodchar1}), we can choose $\xi_\ell=\chi_{7,2^{d-\ell}}$. Then, under this identification, 
for $2\le \ell \le d$, the action of $t(C)$ on $V_\ell$ is given as multiplication by 
\begin{equation}
\label{eq:longone}
\frac{\# C}{\xi_\ell(1)} \cdot \xi_\ell(\sigma)
=2^{c-b} \frac{g^*\cdot n^*}{m^*\cdot x^*}\cdot (\zeta_{2^d}^{2^{d-\ell}}+\zeta_{2^d}^{-2^{d-\ell}})
\end{equation}
where, as shown above, $c\le b$. Note that $\frac{g^*\cdot n^*}{m^*\cdot x^*}$ is a unit in $W$, since 
$g^*\cdot n^*$ and $m^*\cdot x^*$ are odd. Since $t(C)\in WG$, we must have $c\ge b$, i.e. $c=b$.
Therefore, (\ref{eq:longone}) implies that there exists a unit $\omega$ in $W$ such that for 
$2\le \ell \le d$, the action of $t(C)$ on $V_\ell$ is given as multiplication by
\begin{equation}
\label{eq:thatsit}
\omega\cdot (\zeta_{2^d}^{2^{d-\ell}}+\zeta_{2^d}^{-2^{d-\ell}})
\end{equation}
when we identify $\mathrm{End}_{FG}(V_\ell)$ with $\mathrm{End}_{F(\zeta_{2^\ell}+\zeta_{2^\ell}^{-1})
G}(X_\ell)$ for an absolutely irreducible $F(\zeta_{2^\ell}+\zeta_{2^\ell}^{-1})G$-constituent $X_\ell$ of 
$V_\ell$ whose character is $\chi_{7,2^{d-\ell}}$.

\begin{dfn}
\label{def:seemtoneed}
Use the notation introduced above.
\begin{enumerate}
\item[(i)] Define
$$p_{d+1}(t)=\prod_{\ell=2}^d \mathrm{min.pol.}_F(\zeta_{2^\ell}+\zeta_{2^\ell}^{-1}),$$
and let $R'=W[[t]]/(p_{d+1}(t))$.
\item[(ii)] Let $Z=\langle \sigma\rangle$ be a cyclic group of order $2^d$, and let $\tau:Z\to Z$ be the group automorphism sending $\sigma$ to $\sigma^{-1}$. Then $\tau$ can be extended to a $W$-algebra automorphism of the group ring $WZ$ which will again be denoted by $\tau$. Let $T(\sigma^2)=1+\sigma^2+\sigma^4+\cdots +\sigma^{2^d-2}$, and define
$$S'= (WZ)^{\langle \tau\rangle}/\left(T(\sigma^2),\sigma T(\sigma^2)\right).$$
\end{enumerate}
\end{dfn}

\begin{rem}
\label{rem:ohyeah}
The minimal polynomial $\mathrm{min.pol.}_F(\zeta_{2^\ell}+\zeta_{2^\ell}^{-1})$ for $\ell\ge 2$ is as follows:
\begin{eqnarray*}
\mathrm{min.pol.}_F(\zeta_{2^2}+\zeta_{2^2}^{-1})(t)&=&t,\\ \mathrm{min.pol.}_F(\zeta_{2^{\ell}}+\zeta_{2^{\ell}}^{-1})(t)
&=&\left(\mathrm{min.pol.}_F(\zeta_{2^{\ell-1}}+\zeta_{2^{\ell-1}}^{-1})(t)\right)^2-2 \qquad \mbox{for $\ell\ge 3$}.
\end{eqnarray*}

The $W$-algebra $R'$ from Definition \ref{def:seemtoneed} is a complete local 
commutative Noetherian ring with residue field $k$. Moreover, 
\begin{eqnarray*}
F\otimes_W R'&\cong& \prod_{\ell=2}^d F(\zeta_{2^\ell}+\zeta_{2^\ell}^{-1})\quad\mbox{ as $F$-algebras,}\\
k\otimes_W R'&\cong& k[t]/(t^{2^{d-1}-1})\quad\mbox{ as $k$-algebras.}
\end{eqnarray*}
Additionally, for any sequence $\left(r_\ell\right)_{\ell=2}^d$ of odd integers, $R'$ is isomorphic to the $W$-subalgebra of
$$\prod_{\ell=2}^d W[\zeta_{2^\ell}+\zeta_{2^\ell}^{-1}]$$
generated by the element 
$\left(\zeta_{2^\ell}^{r_\ell}+\zeta_{2^\ell}^{-r_\ell}\right)_{\ell=2}^d$.
\end{rem}

\begin{lemma}
\label{lem:quaterniontrick}
Using the notation of Definition $\ref{def:seemtoneed}$, there 
is a continuous $W$-algebra isomorphism $\rho:R'\to S'$ with $\rho(t)=\sigma+\sigma^{-1}$. 
In particular, $R'$ is isomorphic to a subquotient algebra of the group ring over $W$ of a
generalized quaternion group of order $2^{d+1}$.
\end{lemma}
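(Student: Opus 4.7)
The plan is to construct $\rho$ by sending $t$ to $u := \sigma + \sigma^{-1}$, verify that $p_{d+1}(u)$ lies in the ideal $(T(\sigma^2), \sigma T(\sigma^2))$, and then compare $W$-ranks to conclude that the resulting map $R' \to S'$ is an isomorphism. Since $\tau$ fixes $u$, assigning $t \mapsto u$ extends to a continuous $W$-algebra homomorphism $W[[t]] \to (WZ)^{\langle \tau \rangle} \to S'$. The central computation is that $p_{d+1}(u)$ lies in $(T(\sigma^2), \sigma T(\sigma^2))$ inside $(WZ)^{\langle \tau \rangle}$. For this I would use the embedding $WZ \hookrightarrow FZ$ (injective since $WZ$ is $W$-free); over $F$ the group algebra splits as $F \times F \times F(\zeta_4) \times F(\zeta_8) \times \cdots \times F(\zeta_{2^d})$, and $u$ has components $(2, -2, 0, \zeta_8+\zeta_8^{-1}, \ldots, \zeta_{2^d}+\zeta_{2^d}^{-1})$, using $\zeta_4+\zeta_4^{-1}=0$. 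The recursion $m_\ell(t)=m_{\ell-1}(t)^2-2$ of Remark \ref{rem:ohyeah} yields inductively $m_\ell(\pm 2)=2$ for $\ell\ge 3$, together with $m_2(2)=2$ and $m_2(-2)=-2$, so $p_{d+1}(2)=2^{d-1}$ and $p_{d+1}(-2)=-2^{d-1}$. In the components with $\ell\ge 2$, $\sigma^2$ is a primitive $2^{\ell-1}$-th root of unity and $2^{\ell-1}\mid 2^{d-1}$, so $T(\sigma^2)$ vanishes there, while in the first two components $\sigma T(\sigma^2)=\pm 2^{d-1}$. Comparing component-by-component gives $p_{d+1}(u) = \sigma T(\sigma^2)$ in $WZ$, so $\rho$ descends to a map $\rho: R' \to S'$.

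For the isomorphism I would compare $W$-ranks. The ring $(WZ)^{\langle \tau \rangle}$ is $W$-free of rank $2^{d-1}+1$ with basis $\{1\}\cup\{\sigma^j+\sigma^{-j} : 1\le j\le 2^{d-1}-1\}\cup\{\sigma^{2^{d-1}}\}$. Expanding $T(\sigma^2)=1+\sum_{j=1}^{2^{d-2}-1}(\sigma^{2j}+\sigma^{-2j})+\sigma^{2^{d-1}}$ and $\sigma T(\sigma^2)=\sum_{j=0}^{2^{d-2}-1}(\sigma^{2j+1}+\sigma^{-(2j+1)})$ in this basis, each has coefficient $1$ on its highest-index summand, so a unimodular change of basis replaces $\sigma^{2^{d-1}}$ and $\sigma^{2^{d-1}-1}+\sigma^{-(2^{d-1}-1)}$ with $T(\sigma^2)$ and $\sigma T(\sigma^2)$. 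A short calculation using that $\sigma^j T(\sigma^2)$ equals $T(\sigma^2)$ or $\sigma T(\sigma^2)$ according to the parity of $j$ shows the ideal these two elements generate in $(WZ)^{\langle \tau \rangle}$ equals the $W$-submodule $WT(\sigma^2) + W\sigma T(\sigma^2)$, which is a direct summand. Hence $S'$ is $W$-free of rank $2^{d-1}-1$, and Chebyshev-type identities further show $\{1, u, u^2, \ldots, u^{2^{d-1}-2}\}$ is a $W$-basis of $S'$, so $u$ generates $S'$ as a $W$-algebra and $\rho$ is surjective. Tensoring with $F$ identifies both $F\otimes_W R'$ and $F\otimes_W S'$ with $\prod_{\ell=2}^d F(\zeta_{2^\ell}+\zeta_{2^\ell}^{-1})$ compatibly with $\rho$; since $R'$ is $W$-free and $\ker\rho$ becomes zero after inverting $2$, injectivity follows.

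The subquotient statement is then immediate: $S'\cong R'$ is a quotient of the subalgebra $(WZ)^{\langle\tau\rangle}$ of $WZ$, and $WZ$ sits inside $WD$ because $Z$ is the cyclic index-$2$ subgroup of the generalized quaternion group $D$ of order $2^{d+1}$. I expect the main obstacle to be establishing the identity $p_{d+1}(u)=\sigma T(\sigma^2)$; once the numerical values $p_{d+1}(\pm 2)=\pm 2^{d-1}$ are secured from the recursion for $m_\ell$, the remainder of the argument is bookkeeping with free $W$-modules of matching rank.
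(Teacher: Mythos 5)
Your proposal is correct. The paper itself does not give a direct argument but instead cites Lemma~2.3.6 of \cite{3sim} (where $S'$ is built from the cyclic group $Z$ of order $2^d$, so the same statement applies in both the dihedral and quaternion settings; the only additional observation here is that $Z$ sits inside a generalized quaternion group of order $2^{d+1}$). Your write-up supplies the full computation that the citation encapsulates: the identity $p_{d+1}(\sigma+\sigma^{-1})=\sigma T(\sigma^2)$ verified componentwise in $FZ \cong F\times F\times\prod_{\ell=2}^d F(\zeta_{2^\ell})$ using $m_\ell(\pm 2)=2$ for $\ell\ge 3$ and $m_2(\pm 2)=\pm 2$; the observation that the ideal $(T(\sigma^2),\sigma T(\sigma^2))$ is the rank-$2$ direct $W$-summand $WT(\sigma^2)+W\sigma T(\sigma^2)$ of $(WZ)^{\langle\tau\rangle}$, since $\sigma^j T(\sigma^2)$ is $T(\sigma^2)$ or $\sigma T(\sigma^2)$ by parity of $j$; and the rank count $2^{d-1}-1$ on both sides together with the Chebyshev-type unipotent change of basis from $\{\sigma^j+\sigma^{-j}\}$ to $\{u^j\}$ to get surjectivity, with injectivity following since $R'$ is $W$-torsion-free. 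The argument is sound and is, in substance, the proof of the cited lemma; it is a welcome expansion rather than a different route.
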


\begin{proof}
It follows from \cite[Lemma 2.3.6]{3sim} that $\rho:R'\to S'$ is a continuous $W$-algebra isomorphism.
Hence the description of $S'$ in Definition \ref{def:seemtoneed} shows that $S'$ is isomorphic to a
subquotient algebra of the group ring over $W$ of a generalized quaternion group of order $2^{d+1}$.
\end{proof}

\begin{lemma}
\label{lem:quaternionargument}
Assume Hypothesis $\ref{hyp:alltheway}$ and use the notation introduced above.
Let $U'$ be a $WG$-module which is free over $W$ and whose $F$-character is
$$\sum_{\ell=2}^d \rho_\ell = \sum_{i=1}^{2^{d-1}-1}\chi_{7,i}.$$
Then $U'$ is an $R'G$-module which is free as an $R'$-module and $\mathrm{End}_{WG}(U')
\cong R'$.
\end{lemma}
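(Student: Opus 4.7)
The plan is to equip $U'$ with an $R'G$-module structure via the central action of the class sum, then to establish $R'$-freeness and to compute the endomorphism ring. Since $t(C)\in Z(WG)$ is central, it defines an element of $\mathrm{End}_{WG}(U')$ which, in the decomposition $F\otimes_W U'\cong\bigoplus_{\ell=2}^d V_\ell$, acts on $V_\ell$ as multiplication by $\omega(\zeta_{2^\ell}+\zeta_{2^\ell}^{-1})$ by equation~(\ref{eq:thatsit}). Because $\omega\in W^\times$, the $W$-subalgebra of $F\otimes_W R'=\prod_\ell F(\zeta_{2^\ell}+\zeta_{2^\ell}^{-1})$ generated by $(\omega(\zeta_{2^\ell}+\zeta_{2^\ell}^{-1}))_\ell$ coincides with the one generated by $(\zeta_{2^\ell}+\zeta_{2^\ell}^{-1})_\ell$, which Remark~\ref{rem:ohyeah} identifies with $R'$. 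This yields an embedding $R'\hookrightarrow\mathrm{End}_{WG}(U')$, making $U'$ into an $R'G$-module. Moreover $F\otimes_W U'$ is a free $F\otimes_W R'$-module of rank $x$: each $V_\ell$ has $F$-dimension $\rho_\ell(1)=2^{\ell-2}x$ and carries the structure of a module over the field $F(\zeta_{2^\ell}+\zeta_{2^\ell}^{-1})=\mathrm{End}_{FG}(V_\ell)$ of $F$-dimension $2^{\ell-2}$, so it is free of rank $x$ over this field.

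The main step is to upgrade this generic freeness to $R'$-freeness of $U'$. Since $R'$ is a complete local Noetherian ring with maximal ideal $\mathfrak{m}$ and residue field $k$, by Nakayama's lemma it suffices to prove $\dim_k U'/\mathfrak{m}U'=x$. Granting this, a lift of a $k$-basis of $U'/\mathfrak{m}U'$ yields a surjection $\varphi\colon (R')^x\twoheadrightarrow U'$ whose base change to $F$ is a surjection between free $F\otimes_W R'$-modules of equal rank $x$, hence an isomorphism; since $(R')^x$ is $W$-free, $\ker(\varphi)$ is $W$-torsion-free and embeds into $F\otimes_W\ker(\varphi)=0$, forcing $\varphi$ to be an isomorphism and $U'\cong (R')^x$ as $R'$-modules. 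I expect this to be the principal obstacle: verifying $\dim_k U'/\mathfrak{m}U'=x$ demands a careful analysis of the reduction $U'/2U'$ as a module over $R'/2R'\cong k[t]/(t^{2^{d-1}-1})$ under the nilpotent action of $t=\omega^{-1}t(C)$, and will presumably exploit additional properties of $U'$ that hold in the setting where the lemma is used (for instance indecomposability as a $WG$-module, or origin as a universal lift of an indecomposable $kG$-module from Section~\ref{s:stablend}).

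Once $U'\cong (R')^x$ as $R'$-modules, the endomorphism ring is easily pinned down. Because $R'$ is generated over $W$ by the central element $t(C)$, every $WG$-endomorphism of $U'$ automatically commutes with $R'$, so $\mathrm{End}_{WG}(U')=\mathrm{End}_{R'G}(U')\subseteq\mathrm{End}_{R'}(U')\cong M_x(R')$. After tensoring with $F$, each such endomorphism must act as an $F\otimes R'$-scalar multiple of the identity on the free $F\otimes R'$-module $F\otimes U'$, so $\mathrm{End}_{R'G}(U')$ sits inside $M_x(R')\cap (F\otimes R')\cdot\mathrm{Id}=R'\cdot\mathrm{Id}$. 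Combined with the embedding $R'\hookrightarrow\mathrm{End}_{WG}(U')$ from the first paragraph, this gives $\mathrm{End}_{WG}(U')\cong R'$, completing the plan.
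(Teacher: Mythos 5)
Your strategy --- equip $U'$ with an $R'$-action through the central class sum $t(C)$, reduce $R'$-freeness to the equality $\dim_k U'/\mathfrak{m}_{R'}U' = x$ via Nakayama, and then pin down $\mathrm{End}_{WG}(U')$ from a free basis --- is the same as the paper's, and your argument for the last step (that a $WG$-endomorphism automatically commutes with $R'$, acts as an $(F\otimes_W R')$-scalar on $F\otimes_W U'$, and hence lies in $R'$ once an $R'$-basis is present) is if anything spelled out more fully than in the text. You have also correctly located the real content of the lemma: the identity $s=x$ between the minimal number $s$ of $R'$-generators of $U'$ and the rank $x$ of $F\otimes_W U'$ over $F\otimes_W R'$. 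The paper's proof at this point simply asserts that ``$F\otimes_W U'$ is a free $(F\otimes_W R')$-module of rank $s$,'' which is precisely the assertion $s=x$, with no separate justification; so the step you flag as the principal obstacle is the one the paper's own proof glosses over.

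Your suspicion that additional properties of $U'$ are needed is also correct, and one can make it concrete: take $U'=L_2\oplus\cdots\oplus L_d$ where $L_\ell$ is any $WG$-lattice inside the simple $FG$-module $V_\ell$. This $U'$ is $W$-free with $F$-character $\sum_\ell\rho_\ell$, so it satisfies the stated hypotheses, yet $\mathrm{End}_{WG}(U')\cong\prod_\ell\mathrm{End}_{WG}(L_\ell)$ contains the projection idempotents and so cannot be the local ring $R'$; correspondingly $\dim_k U'/\mathfrak{m}_{R'}U'$ equals $(2^{d-1}-1)x$ rather than $x$. Thus the lemma must be read together with the extra structure present at its single point of use in the proof of Theorem~\ref{thm:altogether}: there $U'/2U'$ is identified with a specific indecomposable uniserial module $\overline{U}$ whose dimension over $k$ is exactly $(2^{d-1}-1)x$ and on which \emph{any} nonzero nilpotent $kG$-endomorphism is forced (by uniseriality and the Loewy structure in Figure~\ref{fig:psl1B}) to have image $\mathrm{rad}^4(\overline{U})$, giving $\dim_k(\overline{U}/t\overline{U})=x$. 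That is the input you would need to close the Nakayama step; without it the statement is not true as written.
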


\begin{proof}
We first prove that $R'$ is isomorphic to a $W$-subalgebra of 
$\mathrm{End}_{WG}(U')$. Let $\sigma$ be the same element of order $2^d$ in $D$ as in 
(\ref{eq:great1}), and let $t(C)$ be the 
class sum of the conjugacy class $C$ of $\sigma$ in $G$. Since $t(C)$ lies in the center of $WG$, 
multiplication by $t(C)$ defines a $WG$-module endomorphism of $U'$. Since 
$\mathrm{End}_{WG}(U')$ can naturally be identified with a subring of 
$\mathrm{End}_{FG}(F\otimes_W U')$, $t(C)$ acts on $U'$ as multiplication by a scalar $\lambda_C$ in 
$F\otimes_W R'\cong\prod_{\ell=2}^dF(\zeta_{2^\ell}+\zeta_{2^\ell}^{-1})$. Moreover, $\lambda_C$ 
can be read off from the action of $t(C)$ on $F\otimes_W U'\cong\bigoplus_{\ell=2}^dV_\ell$.
By $(\ref{eq:thatsit})$, $\lambda_C$ equals
$\omega\cdot (\zeta_{2^\ell}^{r_\ell}+\zeta_{2^\ell}^{-r_{\ell}})_{\ell=2}^d$
for some unit $\omega\in W$ and some sequence $(r_\ell)_{\ell=2}^d$ of odd integers.
Since $R'$ is isomorphic to the 
$W$-subalgebra of $\prod_{\ell=2}^d W[\zeta_{2^\ell}+\zeta_{2^\ell}^{-1}]$ generated by 
$(\zeta_{2^\ell}^{r_\ell}+\zeta_{2^\ell}^{-r_{\ell}})_{\ell=2}^{d}$,
this implies that $R'$ is isomorphic to a $W$-subalgebra of 
$\mathrm{End}_{WG}(U')$.
Hence $U'$ is an $R'G$-module. 

We next prove that $U'$ is free as an 
$R'$-module. Since $U'$ is finitely generated as a $W$-module, it is also finitely generated as an 
$R'$-module. Since $R'$ is a local ring with maximal ideal $m_{R'}$, it follows by Nakayama's Lemma that any $k$-basis $\{\overline{b}_1,\ldots, \overline{b}_s\}$ of $U'/m_{R'}U'$ can be lifted to a set 
$\{b_1,\ldots, b_s\}$ of generators of $U'$ over $R'$. Since $F\otimes_W U'$ is a free 
$(F\otimes_W R')$-module of rank $s$, it follows that $b_1,\ldots, b_s$ are linearly independent over 
$R'$. Because $\mathrm{End}_{FG}(F\otimes_W U')\cong F\otimes_WR'$, this then implies that 
$\mathrm{End}_{WG}(U')\cong R'$. 
\end{proof}


\section{Universal deformation rings}
\label{s:udr}
\setcounter{equation}{0}
\setcounter{figure}{0}

As in \S\ref{s:ordinaryquaternion}, let $W$ be the ring of infinite Witt vectors over $k$ 
and let $F$ be the fraction field of $W$.

\begin{thm}
\label{thm:altogether}
Assume Hypothesis $\ref{hyp:alltheway}$.
Suppose $B$ is Morita equivalent to $\Lambda=kQ/I$ where $Q$ and $I$
are as in 
\S$\ref{ss:psl1A}$ $($resp. \S$\ref{ss:psl2A}$, resp. \S$\ref{ss:a7A}$$)$.
Denote the three simple $B$-modules 
by $T_0$, $T_1$ and $T_2$, where $T_i$ corresponds to $S_i$, for 
$i\in\{0,1,2\}$, under the Morita equivalence between $B$ and $\Lambda$.
Suppose $M$ is an indecomposable $B$-module whose stable endomorphism ring
is isomorphic to $k$ and which is inflated from a $\overline{B}$-module.
For $d\ge 2$, let $p_{d+1}(t)\in W[t]$ be as in Definition $\ref{def:seemtoneed}$.
\begin{enumerate}
\item[(i)] Suppose $\Lambda$ is as in 
\S$\ref{ss:psl1A}$
and $d\ge 2$.
If $M$ is a uniserial module of length $4$, then 
$R(G,M)\cong W[[t]]/(p_{d+1}(t))$.
Otherwise $R(G,M)\cong W$.
\item[(ii)] Suppose $\Lambda$ is as in 
\S$\ref{ss:psl2A}$
and $d\ge 2$.
If $M$ is a uniserial module of length $2$ with composition factors $T_1,T_2$, 
then $R(G,M)\cong W[[t]]/(p_{d+1}(t))$.
Otherwise $R(G,M)\cong W$.
\item[(iii)] Suppose $\Lambda$ is as in 
\S$\ref{ss:a7A}$ and $d\ge 3$.
If $M$ is isomorphic to $T_1$, then $R(G,M)\cong W[[t]]/(p_{d+1}(t))$.
Otherwise $R(G,M)\cong W$.
\end{enumerate}
In all cases $(i)-(iii)$, $R(G,M)$ is isomorphic to a subquotient ring of $WD$ and a complete intersection.
\end{thm}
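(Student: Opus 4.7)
The plan is to combine the enumeration of candidates from Remark \ref{rem:stablend} with the mod-$2$ identification provided by Lemma \ref{lem:trythis} and a characteristic-zero lifting argument fed by Section \ref{s:ordinaryquaternion}. By Proposition \ref{prop:stablend} the inflated module $M$ satisfies $\underline{\mathrm{End}}_{kG}(M)\cong k$, hence (via Proposition \ref{prop:stablendudr}) has a universal deformation ring, and Morita equivalence lets me replace $M$ by a module $M_\Lambda$ on the explicit lists (i)--(iii) of Remark \ref{rem:stablend}. I would then collapse these lists using Lemma \ref{lem:defhelp}(i): the Heller operator $\Omega$ pairs the modules into a small number of $\Omega$-orbits, and any obvious quiver symmetry swapping $S_1$ and $S_2$ further cuts the number of representatives I must analyze to a handful per family.

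For each representative I would first compute $R(G,M)/2R(G,M)$. In the generic case I expect $\mathrm{Ext}^1_{kG}(M,M)$ to vanish, which I would verify by reading the radical series in Figures \ref{fig:psl1B}--\ref{fig:a7B} and computing a short piece of a minimal projective resolution of $M_\Lambda$; this forces $R(G,M)/2R(G,M)\cong k$, and hence $R(G,M)$ is a quotient of $W$. On the other hand, one of the six height-zero or top ordinary characters $\chi_1,\dots,\chi_6$ of Section \ref{s:ordinaryquaternion} (selected by matching dimensions against the decomposition matrix in Figures \ref{fig:decompsl1}--\ref{fig:decoma7}) will provide a $WG$-lift of $M$, producing a surjection $R(G,M)\twoheadrightarrow W$; combined with the mod-$2$ bound this gives $R(G,M)\cong W$.

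In the exceptional cases---uniserial length-$4$ modules in Family (I), uniserial length-$2$ modules with composition series $(T_1,T_2)$ in Family (II), and $M\cong T_1$ in Family (III)---I would apply Lemma \ref{lem:trythis} with $r=2^{d-1}-1$ and $Y=M_\Lambda$, taking $\overline{U}$ to be the unique uniserial quotient of the projective cover $P_{T_1}$ of radical length $r\cdot\ell(Y)$ whose composition series is visible in the radical series pictures (the defining relations in \S\ref{ss:psl1A}, \S\ref{ss:psl2A}, \S\ref{ss:a7A} guarantee the required quotient exists). This yields $R(G,M)/2R(G,M)\cong k[t]/(t^{2^{d-1}-1})$. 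To pass to characteristic zero, I would use Lemma \ref{lem:quaternionargument}: the module $U'$ with $F$-character $\sum_{i=1}^{2^{d-1}-1}\chi_{7,i}$ is already an $R'G$-module free over $R'=W[[t]]/(p_{d+1}(t))$ with $\mathrm{End}_{WG}(U')\cong R'$, and an inductive application of Lemma \ref{lem:Wlift} using the additional characters of height $0$ or $d-1$ would produce the required $R'G$-module whose reduction modulo the maximal ideal is $M$. This gives a surjection $R(G,M)\twoheadrightarrow R'$, and comparing with the mod-$2$ computation (both sides reducing to $k[t]/(t^{2^{d-1}-1})$) forces the surjection to be an isomorphism.

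The main obstacle will be the lifting step in the exceptional cases: identifying the correct $\overline{U}$ and verifying $\mathrm{Ext}^1_{kG}(\overline{U},Y)=0$ requires a careful bookkeeping of paths modulo the relations of each family, and constructing the $W$-lifts via repeated application of Lemma \ref{lem:Wlift} demands matching the $FG$-character data from Olsson with the combinatorial structure of $M$ at each stage. Once the two forms of $R(G,M)$ are established, the remaining assertions are immediate: $W$ is a quotient of $WD$ through the augmentation map, while $W[[t]]/(p_{d+1}(t))\cong S'$ is a subquotient of $W\langle\sigma\rangle\subseteq WD$ by Lemma \ref{lem:quaterniontrick}, and both $W$ (being a discrete valuation ring) and $W[[t]]/(p_{d+1}(t))$ (being a hypersurface in the regular ring $W[[t]]$) are complete intersections.
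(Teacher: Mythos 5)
Your strategy is essentially the paper's: enumerate candidates from Remark \ref{rem:stablend}, show $\mathrm{Ext}^1_{kG}(M,M)=0$ in the generic case and construct $W$-lifts from the decomposition matrices, and in the exceptional cases combine Lemma \ref{lem:trythis} (to pin down $R(G,M)/2R(G,M)\cong k[t]/(t^{2^{d-1}-1})$) with the character-theoretic input of \S\ref{s:ordinaryquaternion} and Lemma \ref{lem:quaternionargument}. Two points of imprecision are worth flagging. First, you take $\overline{U}$ to be a uniserial quotient of $P_{T_1}$ in all three families; this is only right when $T_1$ is the top of $M$. In family (I) the exceptional modules $M_{i0j0}$ and $M_{0i0j}$ have tops $T_i$ and $T_0$ respectively, so the relevant quotients live in $P_{T_i}$ and $P_{T_0}$, as the paper spells out. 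Second, the char-$0$ step is stated in the wrong order: you should first apply Lemma \ref{lem:Wlift} (the paper does so to $\Omega^{\pm1}(\overline{U})$, which is a quotient/submodule of a projective indecomposable, then shifts back using Lemma \ref{lem:defhelp}(i)) to produce a $WG$-lift $U'$ of the mod-$2$ universal deformation $\overline{U}$, then compute that its $F$-character is $\sum_{i}\chi_{7,i}$, and only then invoke Lemma \ref{lem:quaternionargument} to endow $U'$ with an $R'$-module structure satisfying $\mathrm{End}_{WG}(U')\cong R'$. Starting from "the module $U'$ with $F$-character $\sum\chi_{7,i}$" presupposes a lift of $\overline{U}$ that you have not yet produced, and "additional characters of height $0$ or $d-1$" are used inside the Lemma \ref{lem:Wlift} induction for the pieces of $\Omega(\overline{U})$, not bolted onto $U'$ afterward. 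Finally, note that the Lemma \ref{lem:trythis} argument applies only when $\mathrm{Ext}^1_{kG}(M,M)\neq 0$; for $d=2$ in family (I) the uniserial length-$4$ modules have $\mathrm{Ext}^1=0$, so one must handle that boundary separately (the answer $W[[t]]/(p_3(t))\cong W$ is consistent since $p_3(t)=t$, but the proof route is the generic one). With these corrections the argument matches the paper's.
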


\begin{proof}
By Proposition \ref{prop:stablend}, $M$ is inflated from a $\overline{B}$-module whose endomorphism 
ring is isomorphic to $k$. Hence we can use
the lists given in Remark \ref{rem:stablend} to describe the possibilities for $M$. Recall that
we have shown in Remark \ref{rem:stablend} that all these $M$ are uniquely determined
by the factors in their descending radical series.

We prove part (i) of Theorem \ref{thm:altogether}, the proofs of parts (ii) and (iii) being similar.
Using the list in Remark \ref{rem:stablend}(i) and the
description of the projective indecomposable $B$-modules in Figure \ref{fig:psl1B},
it follows for all $d\ge 2$
that $\mathrm{Ext}^1_{kG}(M,M)=0$ if $M$ is not a uniserial module of length $4$.
If $M$ is uniserial of length $4$, then $\mathrm{Ext}^1_{kG}(M,M)=0$ if $d=2$ and
$\mathrm{Ext}^1_{kG}(M,M)\cong k$ if $d\ge 3$.

We first show that each $M$ corresponding to a $\Lambda$-module
in the list in Remark \ref{rem:stablend}(i) has a lift over $W$.
We see directly from the decomposition matrix in Figure \ref{fig:decompsl1} that
if $M$ is simple then $M$ has a lift over $W$. We divide the remaining $M$ into two subsets
$\mathcal{M}_1$ and $\mathcal{M}_2$ where
$$\mathcal{M}_1=\left\{\begin{array}{c}T_1\\T_0\end{array}, \begin{array}{c}T_0\\T_1\end{array},
\begin{array}{c}T_0\\T_2\end{array},\begin{array}{c}T_2\\T_0\end{array},
\begin{array}{c}T_1\\T_0\\T_2\end{array},\begin{array}{c}T_2\\T_0\\T_1\end{array},
\begin{array}{c}T_1\\T_0\\T_2\\T_0\end{array},\begin{array}{c}T_2\\T_0\\T_1\\T_0\end{array},
\begin{array}{cc}\multicolumn{2}{c}{T_0}\\T_1&T_2\end{array}\right\}\quad\mbox{ and }$$
$$\mathcal{M}_2=\left\{\begin{array}{c}T_0\\T_1\\T_0\\T_2\end{array}, \begin{array}{c}T_0\\T_2\\T_0\\T_1\end{array},
\begin{array}{cc}T_1&T_2\\\multicolumn{2}{c}{T_0}\end{array}\right\}.$$

For $M\in\mathcal{M}_1$, let $P^W_M$ be a projective indecomposable $WG$-module
such that $k\otimes_WP^W_M$ is a projective $kG$-module cover of $M$.
Using the decomposition matrix in Figure \ref{fig:decompsl1} and \cite[Prop. (23.7)]{CR}, it follows
for $M\in\mathcal{M}_1$
that there is a $W$-pure $WG$-sublattice $X_M$ of  $P^W_M$
such that $U_M=P^W_M/X_M$ defines a lift over $W$ of a $B$-module $N$ which has 
the same top and the same composition factors as $M$. Moreover, it follows from the
description of the projective indecomposable $B$-modules in Figure \ref{fig:psl1B} that 
the factors in the descending radical series of $N$ and $M$ are the same.
Hence it follows from Remark \ref{rem:stablend}(i) that $N$ has to be isomorphic to $M$.

For $M\in\mathcal{M}_2$, let $Q^W_M$ be the projective indecomposable $WG$-module
such that $k\otimes_WQ^W_M$ is a projective $kG$-module cover of $\Omega^{-1}(M)$. 
Using the decomposition matrix in Figure \ref{fig:decompsl1}, \cite[Prop. (23.7)]{CR}
and the description of the projective indecomposable $B$-modules in Figure \ref{fig:psl1B}, 
it follows for $M\in\mathcal{M}_2$
that there is a $W$-pure $WG$-sublattice $Y_M$ of  $Q^W_M$
such that $V_M=Q^W_M/Y_M$ defines a lift of $\Omega^{-1}(M)$ over $W$.
Therefore, $Y_M$ defines a lift of $M$ over $W$.

Hence, every 
$B$-module $M$ which is inflated from a $\overline{B}$-module whose endomorphism ring 
is isomorphic to $k$ has a lift over $W$. This implies that  $R(G,M)\cong W$ 
for all such $M$ in case $d=2$, and for all such $M$ that are not uniserial of length $4$
in case $d\ge 3$.
In particular, this proves part (i) of Theorem \ref{thm:altogether} in case $d=2$ since
$p_3(t)=t$.

To finish the proof of part (i) in general,
assume now
that $d\ge 3$ and $M$ is a uniserial module of length $4$. 
Let $i\neq j$ in $\{1,2\}$ and define
$$M_{i0j0}=\begin{array}{c}T_i\\T_0\\T_j\\T_0\end{array}\qquad\mbox{and}\qquad 
M_{0i0j}=\begin{array}{c}T_0\\T_i\\T_0\\T_j\end{array}.$$
Then $M\cong M_{i0j0}$ (resp.  $M\cong M_{0i0j}$) for some choice of
$i\neq j$ in $\{1,2\}$, and $\mathrm{Ext}^1_{kG}(M,M)\cong k$. 
Considering the projective $B$-module cover $P_{T_i}$ of $T_i$, 
it follows that $P_{T_i}$ has a uniserial quotient module 
$\overline{U}_{i0j0}$ of length $4\cdot (2^{d-1}-1)$ whose
composition factors in the descending radical series are given as
$$(T_i,T_0,T_j,T_0,T_i,T_0,T_j,T_0,\ldots,T_i,T_0,T_j,T_0)$$
such that there are $kG$-module isomorphisms
\begin{eqnarray*}
&\phi_{i0j0}: & \overline{U}_{i0j0} /\mathrm{rad}^{4} (\overline{U}_{i0j0})
\to M_{i0j0}\,,\\
&\psi_{i0j0}: & \overline{U}_{i0j0}/\mathrm{rad}^{4(2^{d-1}-2)} 
(\overline{U}_{i0j0})\to \mathrm{rad}^{4} (\overline{U}_{i0j0}), 
\end{eqnarray*}
and $\mathrm{Ext}^1_{kG}(\overline{U}_{i0j0},M_{i0j0})=0$.
Namely, the $B$-module $\overline{U}_{1020}$ (resp. $\overline{U}_{2010}$)
corresponds to the $\Lambda$-module $M_{\eta\delta\beta(\gamma\eta\delta\beta)^{2^{d-1}-2}}$ 
(resp. $M_{\beta\gamma\eta(\delta\beta\gamma\eta)^{2^{d-1}-2}}$), as defined in Definition 
\ref{def:dirstrings}, under the Morita equivalence between $B$ and $\Lambda$.
Similarly, the projective $B$-module cover $P_{T_0}$ of $T_0$
has a uniserial quotient module $\overline{U}_{0i0j}$ of length $4\cdot (2^{d-1}-1)$ whose 
composition factors in the descending radical series are given as
$$(T_0,T_i,T_0,T_j,T_0,T_i,T_0,T_j,\ldots,T_0,T_i,T_0,T_j)$$
such that there are $kG$-module isomorphisms
\begin{eqnarray*}
&\phi_{0i0j}: &\overline{U}_{0i0j} /\mathrm{rad}^{4} (\overline{U}_{0i0j})
\to M_{0i0j}\,,\\
&\psi_{0i0j}: &\overline{U}_{0i0j}/\mathrm{rad}^{4(2^{d-1}-2)} 
(\overline{U}_{0i0j}) \to \mathrm{rad}^{4} (\overline{U}_{0i0j}),
\end{eqnarray*}
and $\mathrm{Ext}^1_{kG}(\overline{U}_{0i0j},M_{0i0j})=0$.
Namely, the $B$-module $\overline{U}_{0102}$ (resp. $\overline{U}_{0201}$)
corresponds to the $\Lambda$-module $M_{\delta\beta\gamma(\eta\delta\beta\gamma)^{2^{d-1}-2}}$
(resp. $M_{\gamma\eta\delta(\beta\gamma\eta\delta)^{2^{d-1}-2}}$)
under the Morita equivalence between $B$ and $\Lambda$.
Hence it follows by Lemma \ref{lem:trythis} in case $M$ is isomorphic to $M_{i0j0}$ (resp. to
$M_{0i0j}$) 
that $\overline{R}=R(G,M)/2R(G,M)\cong k[t]/(t^{2^{d-1}-1})$. Moreover, the universal 
mod $2$ deformation of $M$ over $\overline{R}$ is  
$[\overline{U}_{i0j0},\phi_{i0j0}]$ (resp. $[\overline{U}_{0i0j},\phi_{0i0j}]$)
where the action of $t$ on $\overline{U}_{i0j0}$ (resp. $\overline{U}_{0i0j}$) is 
given by the $kG$-module endomorphism $\mu_{t,i0j0}$ (resp. $\mu_{t,0i0j}$)
of $\overline{U}_{i0j0}$ (resp. $\overline{U}_{0i0j}$)
which is induced by $\psi_{i0j0}$ (resp. $\psi_{0i0j}$).

We now use Lemma \ref{lem:Wlift} to show that $\overline{U}_{i0j0}$ (resp. $\overline{U}_{0i0j}$)
has a lift over $W$. Consider the following submodule $Z_{i0j0}$ (resp. quotient module
$Z_{0j0i}$) of the projective indecomposable $B$-module $P_{T_i}$:
$$Z_{i0j0}=\Omega(\overline{U}_{i0j0})=
\begin{array}{c@{}c@{}c@{}c@{}c} &&&&T_i\\&&&T_0\\T_i&&T_j\\&T_0\\&T_i
\end{array},\qquad \qquad Z_{0j0i}=\Omega^{-1}(\overline{U}_{0j0i})=
\begin{array}{c@{}c@{}c@{}c@{}c}&T_i\\&T_0\\T_i&&T_j\\&&&T_0\\
&&&&T_i\end{array}.$$
If $Y_{s0t}=\begin{array}{c}T_s\\T_0\\T_t\end{array}$ for $s,t\in\{1,2\}$ then
we have two non-split short exact sequences of $B$-modules
$$0\to Y_{i0i}\to Z_{i0j0}\to Y_{i0j}\to 0$$
and 
$$0\to Y_{j0i}\to Z_{0j0i}\to Y_{i0i}\to 0$$
where $\mathrm{Ext}^1_{kG}(Y_{i0j},Y_{i0i})\cong k$ and $\mathrm{Ext}^1_{kG}(Y_{i0i},Y_{j0i})\cong k$.
Moreover, it follows from the decomposition matrix in Figure \ref{fig:decompsl1} and from the
description of the projective indecomposable $B$-modules in Figure \ref{fig:psl1B} that $Y_{s0t}$
has a lift $(X_{s0t},\xi_{s0t})$ over $W$ for all $s,t\in\{1,2\}$ such that the following holds. 
If $s\neq t$ then the $F$-character of $X_{s0t}$ is $\chi_4$, the $F$-character
of $X_{101}$ is $\chi_2+\chi_5$, and the $F$-character of $X_{202}$ is 
$\chi_3+\chi_6$. Therefore, we have
$$\mathrm{Hom}_{FG}(F\otimes_W X_{i0j},F\otimes_W X_{i0i})=0=
\mathrm{Hom}_{FG}(F\otimes_W X_{i0i},F\otimes_W X_{j0i}).$$
Since 
$$\mathrm{Hom}_{kG}(Y_{i0j},Y_{i0i})\cong k\cong
\mathrm{Hom}_{kG}(Y_{i0i},Y_{j0i}),$$
it follows from Lemma \ref{lem:Wlift} that both $Z_{i0j0}$ and $Z_{0j0i}$ have  a lift over $W$.
Moreover, if $(i,j)=(1,2)$ then the $F$-character of these lifts is $\chi_2+\chi_4+\chi_5$, and if
$(i,j)=(2,1)$ then the $F$-character of these lifts is $\chi_3+\chi_4+\chi_6$.
Since $\overline{U}_{i0j0}\cong \Omega^{-1}(Z_{i0j0})$ (resp. $\overline{U}_{0i0j}\cong \Omega(
Z_{0i0j})$), we obtain that  $\overline{U}_{i0j0}$ (resp. $\overline{U}_{0i0j}$) also
has a lift $(U'_{i0j0},\nu'_{i0j0})$ (resp. $(U'_{0i0j},\nu'_{0i0j})$) over $W$. Because of the $F$-characters of 
$Z_{i0j0}$ and $Z_{0i0j}$, it follows that the $F$-character of  $U'_{i0j0}$ (resp. $U'_{0i0j}$)
is
$$\sum_{\ell=2}^{d}\rho_\ell=\sum_{u=1}^{2^{d-1}-1}\chi_{7,u}.$$
If $M$ is isomorphic to $M_{i0j0}$ (resp. $M_{0i0j}$), let $\overline{U}$ be $\overline{U}_{i0j0}$
(resp. $\overline{U}_{0i0j}$), let $\phi$ be $\phi_{i0j0}$ (resp. $\phi_{0i0j}$),
let $\mu_t$ be $\mu_{t,i0j0}$ (resp. $\mu_{t,0i0j}$),
let $U'$ be $U'_{i0j0}$ (resp. $U'_{0i0j}$) and let $\nu'$ be $\nu'_{i0j0}$ (resp. $\nu'_{0i0j}$). 
By Lemma \ref{lem:quaternionargument}, $U'$ is an $R'G$-module which is 
free as an $R'$-module and $\mathrm{End}_{WG}(U')\cong R'$
where $R'=W[[t]]/(p_{d+1}(t))$. Let $\Psi'_t:U'\to U'$ be the $WG$-module endomorphism of $U'$ which
defines the action of $t\in R'$ on $U'$ and let $\overline{\Psi'_t}:U'/2U'\to U'/2U'$ be the induced
$kG$-module endomorphism of $U'/2U'$. Since $\nu':U'/2U'\to \overline{U}$ is a $kG$-module isomorphism
and $\mathrm{End}_{kG}(U'/2U')\cong R'/2R'\cong k[t]/(t^{2^{d-1}-1})$, it follows that
the $kG$-module endomorphism $\psi'_t=\nu'\circ\overline{\Psi'_t}\circ (\nu')^{-1}$ of $\overline{U}$
satisfies $\psi'_t(\overline{U})=\mu_t(\overline{U}) =t\overline{U}$. 
Thus we obtain a composition of $kG$-modules 
$k\otimes_{R'}U'=U'/(2,t) U'\xrightarrow{\overline{\nu'}} \overline{U}/t\overline{U} \xrightarrow{\phi} M$,
where $(2,t)$ denotes the ideal of $R'$ generated by $2$ and $t$ and
$\overline{\nu'}$ is the $kG$-module isomorphism induced by $\nu'$. If
$\phi'=\phi\circ\overline{\nu'}$, then
$(U',\phi')$ is a lift of $M$ over $R'=W[[t]]/(p_{d+1}(t))$.
We therefore have a 
continuous $W$-algebra homomorphism $\tau:R(G,M)\to R'$ relative to $(U',\phi')$. Since $U'/2U'\cong
\overline{U}$ is indecomposable as a $kG$-module, $\tau$ must be surjective. 
Hence $\tau$ induces a surjective $k$-algebra homomorphism $\overline{\tau}:
R(G,M)/2R(G,M)\to R'/2R'$. Since $R(G,M)/2R(G,M)\cong R'/2R'$ are both finite dimensional over $k$, 
this implies that $\overline{\tau}$ is an isomorphism. Because $R'$ is a free $W$-module of finite rank, 
it follows that $\tau$ is an isomorphism.
By Lemma \ref{lem:quaterniontrick}, $R'$ is isomorphic to a subquotient ring of $WD$.
This completes the proof of part (i), and hence of Theorem \ref{thm:altogether}.
\end{proof}

Since the case $d\ge 4$ in family (III) was excluded in \cite{3sim}, we now determine the universal
deformation rings $R(G/N,M)$ when $\mathrm{End}_{k[G/N]}(M)\cong k$ and $M$ belongs to a block
$\overline{B}$ as in \S\ref{ss:a7A} for $d\ge 4$. We prove a slightly more general result.

\begin{lemma}
\label{lem:moreIII}
Let $k$ be an algebraically closed field of characteristic $2$, let $H$ be a finite group and let $d\ge 4$
be an integer. Suppose $B_H$ is a block of $kH$ with a dihedral defect group $D_H$ of order $2^d$ 
such that $B_H$ is Morita equivalent to $\overline{\Lambda}=kQ/\overline{I}$ where $Q$ and 
$\overline{I}$ are as in \S$\ref{ss:a7A}$.
Denote the three simple $B_H$-modules by $T_0$, $T_1$ and $T_2$, where 
$T_i$ corresponds to the simple $\overline{\Lambda}$-module $S_i$, for $i\in\{0,1,2\}$,
under the Morita equivalence. 
Suppose $M$ is a $kH$-module belonging to $B_H$ with $\mathrm{End}_{kH}(M)\cong k$.
\begin{enumerate}
\item[(i)] If $M$ is not isomorphic to $T_i$ for $i\in\{1,2\}$ and $M$ is not uniserial of length $4$, 
then $R(H,M)\cong W$.
\item[(ii)] If $M$ is isomorphic to $T_2$ or $M$ is uniserial of length $4$, then $R(H,M)\cong k$.
\item[(iii)] If $M$ is isomorphic to $T_1$, then $R(H,M)\cong W[[t]]/(t\,p_d(t),2\,p_d(t))$, where
$p_d(t)\in W[t]$ is as in Definition $\ref{def:seemtoneed}$ $($when $d+1$ is replaced by $d$$)$.
\end{enumerate}
In all cases $(i)-(iii)$, $R(H,M)$ is isomorphic to a subquotient ring of $WD_H$. If $M$ is as in part $(iii)$,
then $R(H,M)$ is not a complete intersection.
\end{lemma}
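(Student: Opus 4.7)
The plan is to analyze $R(H,M)$ case by case for each of the candidate modules $M$ in Remark \ref{rem:stablend}(iii), using the Morita equivalence between $B_H$ and $\overline{\Lambda}$ to reduce to explicit computations with the algebra $\overline{\Lambda}=kQ/\overline{I}$. The strategy parallels Theorem \ref{thm:altogether}: determine $\dim_k\mathrm{Ext}^1_{kH}(M,M)$ to bound the embedding dimension of $R(H,M)$, produce lifts over $W$ where possible, and apply Lemma \ref{lem:trythis} together with character-theoretic input in the remaining cases.

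First I would compute $\mathrm{Ext}^1_{kH}(M,M)$ from the quiver and relations of $\overline{\Lambda}$: only the loop $\alpha$ at vertex $1$ gives a self-extension of a simple module, so $\mathrm{Ext}^1(T_1,T_1)\cong k$ while $\mathrm{Ext}^1(T_0,T_0)=\mathrm{Ext}^1(T_2,T_2)=0$, and a short Loewy-series calculation gives $\mathrm{Ext}^1(M,M)\cong k$ for each uniserial $M$ of length $4$ and $\mathrm{Ext}^1(M,M)=0$ for every other $M$ on the list. Thus in cases (i) and (ii) the ring $R(H,M)$ is a quotient of $W$. For case (i), imitating the proof of Theorem \ref{thm:altogether}(i), I would exhibit a lift of $M$ over $W$ as a $W$-pure sublattice of a projective $WH$-module using the decomposition matrix of the dihedral block $\overline{B}$ (which by Brauer's classification of tame blocks has $2^{d-2}+3$ ordinary irreducible characters); this construction succeeds for every module on the list except $T_2$ and the uniserial modules of length $4$, giving $R(H,M)\cong W$. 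For case (ii), $T_2$ admits no lift over $W$ since the decomposition matrix shows that no combination of ordinary characters of $\overline{B}$ reduces to a module whose top contains $T_2$; combined with $\mathrm{Ext}^1(T_2,T_2)=0$ this forces $R(H,T_2)\cong k$. For the uniserial modules of length $4$, I would apply Lemma \ref{lem:trythis} to the relevant length-$4$ quotient of the appropriate projective of $\overline{\Lambda}$ to show that the mod $2$ universal deformation ring is already $k$, and the $W$-lifting obstruction then yields $R(H,M)\cong k$.

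The main case is (iii). I would apply Lemma \ref{lem:trythis} to the uniserial quotient $\overline{U}$ of $\overline{P}_1$ whose descending composition factors consist of $T_1$ repeated $2^{d-2}$ times, which exists because of the relation $\alpha^{2^{d-2}}=\gamma\eta\delta\beta$ in $\overline{I}$; this gives $R(H,T_1)/2R(H,T_1)\cong k[t]/(t^{2^{d-2}})$. To extend to the full ring I would mimic \S\ref{s:ordinaryquaternion}--\S\ref{s:udr} in the dihedral setting: the non-exceptional characters of $\overline{B}$ assemble into a character $\rho=\sum_{\ell=2}^{d-1}\rho_\ell$ whose associated $W$-lattice, in analogy with Lemma \ref{lem:quaternionargument}, lifts a module closely related to $\overline{U}$ and carries a natural $R'$-module structure for $R'=W[[t]]/(p_d(t))$. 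Transferring this structure back to $T_1$ requires killing $p_d(t)$ both by $t$ (because $\overline{U}$ has length $2^{d-2}$, not $2\cdot 2^{d-2}-1$) and by $2$ (because the lift does not extend integrally), producing a lift of $T_1$ over $W[[t]]/(tp_d(t),2p_d(t))$. The resulting surjection $R(H,T_1)\to W[[t]]/(tp_d(t),2p_d(t))$ is then forced to be an isomorphism since both sides reduce mod $2$ to $k[t]/(t^{2^{d-2}})$. The most delicate point, and what I expect to be the main obstacle, is verifying that exactly these two relations arise (neither more nor fewer); this is the precise way in which the dihedral case diverges from the quaternion case treated in Theorem \ref{thm:altogether}. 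The subquotient-of-$WD_H$ assertion follows from a dihedral analog of Lemma \ref{lem:quaterniontrick}, and the failure of the complete-intersection property follows from the observation that the ideal $(tp_d(t),2p_d(t))$ of $W[[t]]$ requires two minimal generators (since $tp_d(t)$ and $2p_d(t)$ remain linearly independent modulo the product of the maximal ideal of $W[[t]]$ with this ideal), whereas a complete-intersection presentation of the one-dimensional ring $R(H,T_1)$ as a quotient of the two-dimensional regular ring $W[[t]]$ would require only a single relation.
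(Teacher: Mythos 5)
Your computation of $\mathrm{Ext}^1_{kH}(M,M)$ contains an error that propagates through the argument. You claim $\mathrm{Ext}^1(M,M)\cong k$ for each uniserial $M$ of length $4$, but a direct check with $\overline{\Lambda}=kQ/\overline{I}$ shows this group vanishes: for $M$ with descending factors $(S_0,S_1,S_0,S_2)$, the kernel $K$ of $\overline{P}_0\twoheadrightarrow M$ is generated in degree one by the image of $\delta$, $\dim_k\mathrm{Hom}(K,M)=1$, and the restriction map $\mathrm{Hom}(\overline{P}_0,M)\to\mathrm{Hom}(K,M)$ is already onto (send the generator of $\overline{P}_0$ to the basis vector of $M$ in Loewy layer $3$), so $\mathrm{Ext}^1(M,M)=0$. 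The paper states exactly this, and in fact the only $M$ on the list with $\mathrm{Ext}^1(M,M)\neq 0$ is $T_1$. Consequently your plan for the length-$4$ uniserials in part (ii) cannot work as written: Lemma \ref{lem:trythis} requires $\mathrm{Ext}^1(Y,Y)\cong k$ and, when it applies, it returns $k[t]/(t^r)$ with $r\ge 2$, never $k$; you cannot use it to ``show that the mod $2$ universal deformation ring is already $k$.'' Your argument for $T_2$ in part (ii) is also incomplete: $\mathrm{Ext}^1(T_2,T_2)=0$ plus the absence of a $W$-lift only pins $R(H,T_2)$ down to the class $\{k\}\cup\{W/2^nW: n\ge 2\}$; one still has to rule out lifts over $W/4W$, etc. The paper handles all of part (ii) uniformly by observing that these modules lie at the boundary of a $3$-tube in the stable Auslander--Reiten quiver of $B_H$ and invoking the argument of \cite[Cor.\ 5.2.5]{3sim}; some such additional input is genuinely needed here.

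For part (iii) the endgame has a second gap. You propose to manufacture a lift of $T_1$ over $W[[t]]/(tp_d(t),2p_d(t))$ and then deduce that the induced surjection from $R(H,T_1)$ onto that ring is an isomorphism ``since both sides reduce mod $2$ to $k[t]/(t^{2^{d-2}})$.'' That last implication is false: for instance $W[[t]]/(tp_d(t),2p_d(t))$ and $W[[t]]/(tp_d(t),4p_d(t))$ have the same reduction mod $2$ but are not isomorphic, and more to the point you have not explained how to construct a lift over the smaller ring at all (it is not $W$-flat). The paper's route is different and more careful: it first lifts $\overline{U'}=\mathrm{rad}(\overline{U})$ over $W$ using the decomposition matrix (\ref{eq:help}), promotes this to a lift of $T_1$ over $W[[t]]/(p_d(t))$ as in \cite[Thm.\ 5.1]{3sim}, obtains a surjection $\tau':R(H,T_1)\to W[[t]]/(p_d(t))$, and then applies \cite[Lemma 2.3.3]{3sim} to conclude $R(H,T_1)\cong W[[t]]/(p_d(t)(t-2c),a\,2^m p_d(t))$ for some $c\in W$, $a\in\{0,1\}$, $m\ge 1$. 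The crucial final step pins down $a=m=1$ by observing that if $a=0$ (or $a=1$ with $m\ge 2$) the universal object would exhibit $\overline{U}$ as liftable over $W$ (resp.\ over $W/2^mW$), contradicting $R(H,\overline{U})\cong k$; and $R(H,\overline{U})\cong k$ is deduced from part (ii) because $\overline{U}$ lies in the $\Omega$-orbit of a uniserial $B_H$-module of length $4$. You flag ``verifying that exactly these two relations arise'' as the delicate point but do not supply an argument for it, and the route you sketch does not supply one either. Your computations of $\overline{R}=k[t]/(t^{2^{d-2}})$, the non-complete-intersection argument via $\dim_k I/m I=2>\mathrm{ht}(I)=1$, and the $WD_H$-subquotient claim via an analogue of Lemma \ref{lem:quaterniontrick} are all sound; the gaps are in parts (ii) and (iii) as described.
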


\begin{proof}
Since $B_H$ is Morita equivalent to $\overline{\Lambda}$ as in \S\ref{ss:a7A} and since the 
endomorphism ring of $M$ is isomorphic to $k$, we can use the list in Remark \ref{rem:stablend}(iii) 
to describe all possibilities for $M$.
It follows that
$\mathrm{Ext}^1_{kH}(M,M)=0$ if $M$ is not isomorphic to $T_1$ and 
$\mathrm{Ext}^1_{kH}(T_1,T_1)\cong k$. 
By \cite[p. 295]{erd}, the decomposition matrix of $B_H$ has the form
\begin{equation}
\label{eq:help}
\begin{array}{ccc}
&\begin{array}{c@{}c@{}c}\varphi_0\,&\,\varphi_1\,&\,\varphi_2\end{array}\\[1ex]
\begin{array}{c}\chi_1\\ \chi_2\\ \chi_3 \\ \chi_4\\  \chi_{5,i}\end{array} &
\left[\begin{array}{ccc}1&0&0\\1&1&0\\1&0&1\\1&1&1\\0&1&0\end{array}\right]
&\begin{array}{c}\\ \\ \\ \\1\le i\le 2^{d-2}-1.\end{array}
\end{array}
\end{equation}
Moreover, the description in \cite[\S3.4]{3sim} of the ordinary characters belonging to blocks with dihedral defect 
groups and precisely three isomorphism classes of simple modules is in particular valid for $B_H$.
It follows from this description that $\chi_1,\chi_2,\chi_3,\chi_4$ are the characters of simple $FH$-modules
and that $\sum_{i=1}^{2^{d-2}-1}\chi_{5,i}$ is the character of a semisimple $FH$-module. 

Using the decomposition matrix in (\ref{eq:help}) together with \cite[Prop. (23.7)]{CR}, it
follows that the modules $M$ in parts (i) and (iii) of
Lemma \ref{lem:moreIII} have a lift over $W$ but not the modules $M$ in part (ii). In particular, this 
implies that $R(H,M)\cong W$ for all $M$ as in part (i).

If $M$ is as in part (ii), then $M$ belongs to the boundary of a $3$-tube of the stable Auslander-Reiten
quiver of $B_H$. Hence we can use the same arguments as in the proof of \cite[Cor. 5.2.5]{3sim} to
see that $R(H,M)\cong k$ for all $M$ as in part (ii).

Finally, let $M$ be as in part (iii), i.e. $M\cong T_1$. Let $R=R(H,T_1)$ and $\overline{R}=R/2R$.
Considering the projective $B_H$-module cover $P_{T_1}$ of $T_1$, it follows that $P_{T_1}$ 
has a uniserial quotient module $\overline{U}$ of length $2^{d-2}$ whose composition factors are 
all isomorphic to $T_1$ such that $\mathrm{Ext}^1_{kH}(\overline{U},T_1)=0$ and such that
there are $kH$-module isomorphisms 
$\phi:\overline{U}/\mathrm{rad} (\overline{U})\to T_1\,$, 
$\phi':\mathrm{rad}(\overline{U})/\mathrm{rad}^2 (\overline{U})\to T_1$ and
$\psi:\overline{U}/\mathrm{rad}^{2^{d-2}-1}(\overline{U}) \to \mathrm{rad}(\overline{U})$.
Hence it follows by Lemma \ref{lem:trythis} that 
$\overline{R}\cong k[t]/(t^{2^{d-2}})$
and that the universal mod $2$ deformation of $T_1$ over $\overline{R}$ is  
$[\overline{U},\phi]$, where the action of $t$ on $\overline{U}$ is 
given by the $kH$-module endomorphism $\mu_t$ of $\overline{U}$
which is induced by $\psi$. 
Let $\overline{U'}=\mathrm{rad}(\overline{U})$. Then $(\overline{U'},\phi')$ is a lift of $T_1$ over
$k[t]/(t^{2^{d-2}-1})$, where the action of $t$ on $\overline{U'}$ is given by the $kH$-module
endomorphism $\mu'_t$ of $\overline{U'}$ which is the restriction of $\mu_t$ to
$\overline{U'}$.
Using the decomposition matrix in (\ref{eq:help}) together with 
\cite[Prop. (23.7)]{CR}, we see that $\overline{U'}$ has a lift $(U',\xi')$ over $W$ such that 
the $F$-character of $U'$ is equal to $\sum_{i=1}^{2^{d-2}-1}\chi_{5,i}$. Using similar 
arguments as in the proof of \cite[Thm. 5.1]{3sim} and employing
the description of the ordinary characters belonging to $B_H$ in \cite[\S3.4]{3sim}, it follows that
$U'$ defines a lift $(U',\nu')$ of $T_1$ over $W[[t]]/(p_d(t))$, where $p_d(t)$ is as in Definition 
\ref{def:seemtoneed} (when $d+1$ is replaced by $d$). We therefore have a continuous $W$-algebra homomorphism 
$\tau':R=R(H,T_1)\to W[[t]]/(p_d(t))$ relative to $(U',\nu')$. Since $k\otimes_WU'\cong \overline{U'}$ is 
indecomposable as a $kH$-module, 
$\tau'$ must be surjective.  By \cite[Lemma 2.3.3]{3sim},  it follows that 
$R\cong W[[t]]/(p_d(t)(t-2c),a\,2^mp_d(t))$ for certain $c\in W$, $a\in\{0,1\}$ 
and $0 < m\in\mathbb{Z}$. 
Let $[U,\nu]$ be the universal deformation of $T_1$ over $R$. Since $U$ is free over
$R$ and since we can identify $k\otimes_WR=R/2R=\overline{R}$, it follows that we 
can also identify $k\otimes_W U = U/2U = \overline{R}\otimes_R U$ as $\overline{R}H$-modules. 
Hence $[U/2U,\nu]$ is
equal to the universal mod $2$ deformation $[\overline{U},\phi]$ of $T_1$ over $\overline{R}$.
In particular, there is a $kH$-module isomorphism $\xi:U/2U\to \overline{U}$.
If $a=0$, then $R\cong W[[t]]/(p_d(t)(t-2c))$ is free over $W$, which implies that
$(U,\xi)$ is a lift of $\overline{U}$ over $W$.
If $a=1$, then $(W/2^mW)\otimes_W R\cong (W/2^m W)[[t]]/(p_d(t) (t-2c))$ is free over 
$W/2^mW$, which implies that $((W/2^mW)\otimes_W U, \xi)$ is a lift of $\overline{U}$ over 
$W/2^mW$. Since $\overline{U}$ lies in the $\Omega$-orbit of a uniserial $B_H$-module of
length $4$, it follows that $R(H,\overline{U})\cong k$ by what we have proved above for the modules
in part (ii). Hence $a=1$ and $m=1$, which implies $R=R(H,T_1)\cong W[[t]]/(t\,p_d(t),2\,p_d(t))$.
\end{proof}


\begin{figure}[ht] 
\caption{\label{fig:psl1B} The projective indecomposable
modules for blocks $B$ in family (I).}
$$P_0=\vcenter{ \xymatrix @R=.1pc @C=.3pc{&0&\\
1&&2\\0\ar@{-}[rrdddddddd]&&0\ar@{-}[lldddddddd]\\2&&1\\0&&0\\:&&:\\:&&:\\0&&0\\1&&2\\0&&0\\2&&1\\&0&}},
\qquad P_1=\vcenter{ \xymatrix @R=.1pc @C=.3pc{&1&\\&0\ar@{-}[ldddd]&\\&&2\\&&0\\&&1\\
1\ar@{-}[rdddd]&&:\\&&:\\&&0\\&&2\\&0&\\&1&}}, 
\qquad P_2=\vcenter{ \xymatrix @R=.1pc @C=.3pc{&2&\\&0\ar@{-}[ldddd]&\\&&1\\&&0\\&&2\\
2\ar@{-}[rdddd]&&:\\&&:\\&&0\\&&1\\&0&\\&2&}}.$$
\end{figure}
\begin{figure}[ht]  
\caption{\label{fig:decompsl1} The decomposition matrix for blocks $B$
in family (I).}
$$\begin{array}{ccc}
&\begin{array}{c@{}c@{}c}\varphi_0\,&\,\varphi_1\,&\,\varphi_2\end{array}\\[1ex]
\begin{array}{c}\chi_1\\ \chi_2\\ \chi_3 \\ \chi_4\\ \chi_5\\ \chi_6\\ \chi_{7,i}\end{array} &
\left[\begin{array}{ccc}1&0&0\\1&1&0\\1&0&1\\1&1&1\\0&1&0\\0&0&1\\2&1&1\end{array}\right]
 &\begin{array}{c}\\ \\ \\ \\ \\ \\1\le i\le 2^{d-1}-1.\end{array}\end{array}$$
\end{figure}
\begin{figure}[ht] 
\caption{\label{fig:psl2B} The projective indecomposable
modules for blocks $B$ in family (II).}
$$P_0=\vcenter{ \xymatrix @R=.1pc @C=.2pc{&0&\\
1\ar@{-}[rrdd]&&2\ar@{-}[lldd]\\0&&0\\1&&2\\&0&}},
\qquad P_1=\vcenter{ \xymatrix @R=.1pc @C=.2pc{&1&\\0\ar@{-}[rrdddddd]&&2\ar@{-}[lldd]\\
1&&1\\0&&2\\&&:\\&&:\\&&1\\&&2\\&1&}}, \qquad 
P_2=\vcenter{ \xymatrix @R=.1pc @C=.2pc{&2&\\0\ar@{-}[rrdddddd]&&1\ar@{-}[lldd]\\
2&&2\\0&&1\\&&:\\&&:\\&&2\\&&1\\&2&}}.$$
\end{figure}
\begin{figure}[ht]  
\caption{\label{fig:decompsl2}The decomposition matrix for blocks  $B$ in family (II).}
$$\begin{array}{ccc}
&\begin{array}{c@{}c@{}c}\varphi_0\,&\,\varphi_1\,&\,\varphi_2\end{array}\\[1ex]
\begin{array}{c}\chi_1\\ \chi_2\\ \chi_3 \\ \chi_4\\ \chi_5\\ \chi_6\\ \chi_{7,i}\end{array} &
\left[\begin{array}{ccc}1&0&0\\0&1&0\\0&0&1\\1&1&1\\1&1&0\\1&0&1\\0&1&1\end{array}\right]
&\begin{array}{c}\\ \\ \\ \\ \\ \\1\le i\le 2^{d-1}-1.\end{array}
\end{array}$$
\end{figure}
\begin{figure}[ht] 
\caption{\label{fig:a7B} The projective indecomposable modules
for blocks $B$ in family (III).}
$$P_0=\vcenter{\xymatrix @R=.1pc @C=.2pc {&0&\\
1\ar@{-}[rrdddddd]&&2\\0&&0\ar@{-}[llddddd]\\2&&1\\ 0&&0\\1&&2\\0&&0\\
2&&1\\&0&}},
\qquad P_1=\vcenter{\xymatrix @R=.1pc @C=.2pc {&1&\\1\ar@{-}[rrdddddd]&&0\ar@{-}[lldddddd]\\ 
1&&2\\:&&0\\ :&&1\\ :&&0\\ 1&&2\\1&&0\\&1&}},
\qquad P_2=\vcenter{\xymatrix @R=.1pc @C=.2pc {&2&\\&0\ar@{-}[lddd]&\\
&&1\\&&0\\2\ar@{-}[rddd]&&2\\&&0\\&&1\\&0&\\&2&}}$$
\vspace{1ex}
\end{figure}
\begin{figure}[ht]  
\caption{\label{fig:decoma7}The decomposition matrix for blocks  
$B$ in family (III).}
$$\begin{array}{ccc}
&\begin{array}{c@{}c@{}c}\varphi_0\,&\,\varphi_1\,&\,\varphi_2\end{array}\\[1ex]
\begin{array}{c}\chi_1\\ \chi_2\\ \chi_3 \\ \chi_4\\ \chi_5\\ \chi_6\\ \chi_{7,i}\end{array} &
\left[\begin{array}{ccc}1&0&0\\1&1&0\\1&0&1\\1&1&1\\2&1&1\\0&0&1\\0&1&0\end{array}\right]
&\begin{array}{c}\\ \\ \\ \\ \\ \\1\le i\le 2^{d-1}-1.\end{array}
\end{array}$$
\vspace{1ex}
\end{figure}

\end{document}